\newtheorem{construction}{Construction}
\newtheorem{theorem}{Theorem}[section]
\newtheorem{corollary}{Corollary}
\newtheorem{proposition}{Proposition}
\theoremstyle{definition}
\newtheorem{definition}[theorem]{Definition}
\newtheorem{remark}{Remark}
\email{tbardini@uottawa.ca}
\email{lmoura@uottawa.ca}
\thanks{Thais Bardini Idalino acknowledges funding granted from CNPq-Brazil [233697/2014-4]. Lucia Moura was supported by an NSERC discovery grant.}
\keywords{Combinatorial cryptography, Combinatorial group testing, Cover-free family, Embedding cover-free family, Polynomial over finite field, Separating hash family.}
\subjclass[2010]{Primary: 05B99, 94C12; Secondary: 11T71.}
\begin{document}
\title{Embedding cover-free families and cryptographical applications}

\maketitle

\centerline{Thais Bardini Idalino \and Lucia Moura}
\medskip
{\footnotesize

	\centerline{University of Ottawa}
}

\begin{abstract}
Cover-free families are set systems used as solutions for a large variety of problems, and in particular, problems where we deal with $n$ elements and want to identify $d$ invalid ones among them by performing only $t$ tests ($t \leq n$). 
We are specially interested in cryptographic problems, and we note that some of these problems need cover-free families with an increasing size $n$. Solutions that propose the increase of $n$, such as \emph{monotone families} and \emph{nested families}, have been recently considered in the literature. In this paper, we propose a generalization that we call \emph{embedding families}, which allows us to increase both $n$ and $d$. We propose constructions of \emph{embedding families} using polynomials over finite fields, and show specific cases where this construction allows us to prioritize increase of $d$ or $n$ with good compression ratios.
We also provide new constructions for monotone families with improved compression ratio. Finally, we show how to use embedded sequences of orthogonal arrays and packing arrays to build embedding families.
\end{abstract}

\section{Introduction}

A cover-free family (CFF) is a set system usually studied in the context of group testing applications. In this scenario, we are given a set of $n$ elements and want to identify up to $d$ invalid ones in a more efficient way than testing each one of them individually. A $d$-cover-free family (or $d$-CFF) will indicate how to group the $n$ elements into $t$ groups ($t \leq n$), and by performing only $t$ tests we will be able to identify up to $d$ invalid elements.
These families are used to solve several problems in cryptography, such as one-time and multiple-times digital signature schemes~\cite{ZaveruchaSignature,hors++}, fault-tolerant aggregation of signatures~\cite{hartung,nested,zaverucha}, modification localization on signed documents and redactable signatures~\cite{thaisIPL}, broadcast authentication~\cite{broadcastauth}, broadcast encryption~\cite{broadcastenc}, among others~\cite{CFFapp}.

We can represent $d$-CFFs as set systems or their corresponding incidence matrices.
%
A set system $\mathcal{F} = (X, \mathcal{B})$ consists of a set $X = \{x_1, \ldots, x_t\}$ with $|X| = t$, and a collection $\mathcal{B} = \{B_1, \ldots, B_n\}$ with $B_i \subseteq X, 1 \leq i \leq n,$ and $|\mathcal{B}| = n$. A $d$-cover-free family, denoted $d-$CFF$(t,n)$, is a set system such that for any subset $B_{i_0} \in \mathcal{B}$ and any other $d$ subsets $B_{i_1}, \ldots, B_{i_d} \in \mathcal{B}$, we have
\begin{equation}\label{property:cff}
B_{i_0} \nsubseteq \bigcup_{j=1}^{d}B_{i_j}.
\end{equation}	
Family $\mathcal{F}$ can be represented by its $t \times n$ binary incidence matrix $\mathcal{M}$:
\[\mathcal{M}_{i,j} =
\begin{cases}
1, & \text{ if } x_i \in B_j, \\
0 & \text{ otherwise.}
\end{cases}\]
In the remaining of this paper, we may use the term $d$-CFFs to refer to their incidence matrices. 
For a basic reference in combinatorial group testing see \cite{gtLivro}, and for more information about combinatorial designs see \cite{handbook}.
Table \ref{2cff} shows how a $2-$CFF($9,12$) can be used to test $n=12$ elements and identify up to $d=2$ invalid ones using $t=9$ tests.

\begingroup
\setlength{\tabcolsep}{8pt} 
\begin{table}[h!]\label{2cff}
	\caption{Example of a $2$-CFF($9,12$) used in group testing.}
	\centering
	\begin{tabular}{l|c c c c c c c c c c c c |l}
		& 1 & 2 & 3 & 4 & 5 & 6 & 7 & 8 &9 &10& 11& 12& result\\ \hline
		$test_1$  & 1& 0 & 0 & 1  & 0 & 0 &  1&0 &0 & 1 &0 & 0 & 0\\
		$test_2$  &  1 & 0 & 0 & 0 &1  & 0 & 0 &  1 & 0 &  0 & 1&  0 & 0\\
		$test_3$  &  1 & 0 & 0 & 0 & 0 &  1 & 0 &0 & 1 &0 & 0 & 1 &\textbf{1}\\
		$test_4$  & 0 &  1& 0 & 1 & 0 & 0 & 0 &0 & 1 &0 & 1 & 0 &0\\
		$test_5$  & 0 & 1 & 0 & 0 & 1 & 0 & 1 & 0 & 0 & 0 &  0 & 1&\textbf{1}\\
		$test_6$  & 0 & 1 & 0 & 0 & 0 & 1 & 0 &1& 0 & 1 & 0 &  0& 0\\
		$test_7$  & 0 & 0 &  1 &  1 & 0 & 0 & 0 &  1 & 0 & 0 & 0 &  1 & \textbf{1}\\
		$test_8$  & 0 & 0 &  1 & 0 &  1 & 0 & 0 & 0 &  1&  1 & 0 & 0 & \textbf{1}\\
		$test_9$  & 0 & 0 &  1 & 0 & 0 & 1 & 1 &0 & 0 & 0 & 1 & 0  & \textbf{1}\
	\end{tabular}\\
\end{table}
\endgroup
The columns of the matrix represent the elements to be tested, and the rows indicate which elements we are testing together.
After performing the $9$ testes we obtain the last column with some results. If all the elements in a test are valid, the test \emph{passes} (represented as $0$), but if there is at least one defective element in a test, it \emph{fails} (represented as $1$). By the tests that pass we can identify all the valid elements, which in the example are $1,2,4,5,6,7,8,9,10,$ and $11$. 
Since the remaining set of elements $S = \{3,12\}$ have $|S| \leq d$, by the definition of $d$-CFF we can conclude $3$ and $12$ are the defectives (since each of them are the only possible cause for failure in tests $test_3, test_5, test_7, test_8, test_9$).

CFFs provide a practical solution for problems where the number of elements $n$ is known a priori. For applications where $n$ is not known or can dynamically increase over time, we need a scheme that provides matrix growth. This can be done with a special sequence of $d$-CFFs, where the previous matrix is a sub-matrix of the next ones, so that we can reuse the groups and computations we already performed for smaller values of $n$.   \emph{Monotone families}~\cite{hartung} and \emph{nested families}~\cite{nested} of $d$-CFFs are examples of such special sequences that are used to acchieve unbounded fault-tolerant aggregate digital signatures. 
One drawback of these families is that $d$ must be constant, so we need more general sequences of families if we wish $d$ to grow with $n$.  For this purpose, in this paper,
we define a generalization of both monotone and nested families called  \emph{embedding families}.

To compare the efficiency of different families of CFF, we consider the \emph{compression ratio}, which is given by $\rho(n)$ when $\frac{n}{t(n)}$ is $O(\rho(n))$. The compression ratio measures the efficiency gained from group testing, which performs $t(n)$ tests rather than $n$. We look for constructions with $\rho(n)$ as large as possible, for example, the ones that meet or are close to the information theoretical bound $\rho(n) = \frac{n}{(d^2/\log d) \log n}$~\cite{furedi}. 
In the literature, monotone families with constant compression ratio have been given in~\cite{hartung}, while several constructions of (the more general) nested families 
with compression ratio closer to the information theoretical bound were presented in~\cite{nested}.
Considering the limitation of constant $d$ for monotone and nested families, the present paper gives constructions of embedding families with good compression ratios that allows $d$ 
to grow with $n$.

Our contributions in this paper are as follows. 
We revisit a construction of $d$-CFF by Erd\"os et al.~\cite{erdospoly} based on polynomials over finite fields (Theorem~\ref{theopoly}) and highlight some useful properties 
related to progressive $d$ (Theorem~\ref{smallercff}). This property can be observed in the example in Table \ref{table:2cff}, where a matrix has submatrices with smaller $d$ inside it, which allows us to early abort the testing after enough tests are done for the actual level of defectives.
We then give a general construction of embedding families of CFF, each CFF based on this polynomial construction, and stacked together using extension fields (Theorem~\ref{theofamily}). 
Specific applications of this general construction  give embedding families with sublinear $d = d(n)$ and $\rho(n) = n^{1-\frac{2}{k+1}}$ (Corollary \ref{dinc}) as well as with constant $d$ and $\rho(n)=\frac{n}{\log n}$, achieving the information theoretical upper bound  (Corollary \ref{kinc}). Moreover, we show it is possible to adapt this construction to build monotone families with compression ratio $\rho(n) = n^{1-\frac{1}{k+1}}$, for each arbitrary constant $k \geq 1$ (Theorem~\ref{monotoneinc}), which is much superior to the constant compression ratios obtained in~\cite{hartung}. Finally, we show that families of orthogonal arrays and packing arrays with some specific properties generalize the polynomial construction of embedding families (Proposition~\ref{embeddingPA}), which can open the door for new constructions in the future.


In Section~\ref{embed}, we define embedding families and discuss cryptographical applications. In Section~\ref{embeddingsec}, we give constructions of embedding families based on polynomials over finite fields. In Section~\ref{appsec}, we discuss the use of these constructions in applications, and challenges related to drop of actual compression ratios when columns are not used.
In Section~\ref{section:generalized}, we generalize the polynomial construction by using other combinatorial designs, and in Section~\ref{sec:conclusion}, we give conclusions.

\section{Embedding Sequences and its Applications}\label{embed}

In this section, we present CFF constructions for unbounded applications, which are applications where $n$ may not be known a priori or can grow over time. 
We introduce the notion of \emph{embedding families} to be a sequence of CFFs that allows for the increase of $n$ and $d$, and we also show how they are a generalization of nested families \cite{nested} and monotone families \cite{hartung}. 




\begin{definition}[Embedding family]\label{embeddingfamily} Let $d(l)$ be a positive integer and let $(\mathcal{M}^{(l)})_l$ be a sequence of incidence matrices of cover-free families $(\mathcal{F}_l)_l = (X_l, \mathcal{B}_l)_l$, where $\mathcal{M}^{(l)}$ is a $d(l)$-CFF with number of rows and columns denoted by rows($l$) and cols($l$), respectively. $(\mathcal{M}^{(l)})_l$ is a \emph{embedding family} of incidence matrices of CFFs, if $X_l \subseteq X_{l+1}$, rows($l$) $\leq$ rows($l+1$), and cols($l$) $\leq$ cols($l+1$), $d(l) \leq d(l+1)$ and
		\[\mathcal{M}^{(l+1)}= \begin{pmatrix} \mathcal{M}^{(l)} & Y\\ Z & W \end{pmatrix}.\]
\end{definition}	

We can see that monotone and nested families are a special case of embedding families. They allow us to increase $n$ for fixed $d$, with a $Z$ that has a special format. The definitions are shown below.

\begin{definition}[Nested family]
	A nested family of incidence matrices of $d$-CFFs is an embedding family of incidence matrices with fixed $d$ such that each row of $Z$ is one of the rows of $\mathcal{M}^{(l)}$, a row of all zeros, or a row of all ones.
\end{definition}	

Nested families were defined in \cite{nested} to solve a problem in unbounded aggregation of digital signatures, where three different constructions with increasing compression ratio are presented.


\begin{definition}[Monotone family] 
	A monotone family of incidence matrices of $d$-CFFs is an embedding family of incidence matrices  with fixed $d$ such that $Z$ is a matrix of zeros.
%
\end{definition}	
Monotone families were introduced by Hartung et al. \cite{hartung} to solve the problem of unbounded aggregation of signatures. They showed a concrete instantiation of monotone families with a constant compression ratio. We show in Theorem \ref{monotoneinc} a construction for monotone families with $\rho(n) = n^{1 - \frac{1}{c}}$, for a constant c.

\subsection{Cryptographical Applications}
We can think of a variety of applications for embedding families. General group testing applications, for example, may take advantage of this family for cases where increasing $n$ is necessary, together with the possibility of larger $d$'s. Here we are most interested in applications related to cryptography.

\noindent
\textbf{Aggregation of signatures:}
The purpose of aggregation of signatures is to save on storage, communication and verification time by combining several signatures together \cite{Boneh}, and $d$-CFFs are known to provide this while allowing the identification of up to $d$ invalid signatures \cite{hartung,nested,zaverucha}. Since the number of signatures may not be known a priori, it is important to have a $d$-CFF that allows the increase of $n$. However, after signatures are aggregated together using a smaller matrix, the individual signatures are discarded and we only keep the aggregated ones, which implies that larger matrices should not require the knowledge of those signatures that were discarded. A solution for this was first proposed by Hartung et al.  \cite{hartung} using monotone families, where the zero matrix bellow $\mathcal{M}^{(l)}$ address this problem directly. Nested families can also be used as a solution to this problem, since its submatrix $Z$ also address this problem by requiring only one extra aggregation of past signatures \cite{nested}. The advantage of nested families is that the known constructions present a much better compression ratio, which is closer to the information theoretical bound, and consequently gives smaller aggregate signature size \cite{nested}. 

\noindent
\textbf{Broadcast encryption:}
In this scheme, a sender broadcasts encrypted messages to a set of $n$ users, but want to prevent some of them from recovering these messages.
An example of such application is paid television, where only some users can have access to certain paid channels \cite{broadcastenc}. 
Gafni et al. \cite{broadcastenc} propose the use of $d$-CFF for distributing the keys that are used to encrypt and decrypt the message. In this scenario, the columns of the $d$-CFF represent the users, and the rows represent a set of $t$ keys. Each user receives a subset of the keys according to their column, and the $d$-CFF property  guarantees that we can remove up to $d$ users and their respective keys without compromising the ability of the remaining users to decrypt the content. In this scheme, an embedding family would provide a \emph{fully scalable} scheme \cite{broadcastenc}, where we can add new users by increasing $n$, and additionally handle a larger number $d$ of users that may be removed.



\noindent
\textbf{Broadcast authentication:} In this scheme, sender and receivers agree on secret keys, and these keys are used to guarantee the authenticity of broadcasted messages. 
However, there may be malicious users who can get together and use their secret keys and previous communication to create fraudulent messages, which may be accepted by some users as authentic  \cite{CFFapp,broadcastauth}.
Safavi-Naini and Wang \cite{broadcastauth} propose the use of $d$-CFFs to manage the distribution of keys. Again, the columns of the CFF represent the users, the rows represent the keys, and each user receives a subset of these keys corresponding to their column of the matrix. Because of the $d$-CFF, the union of the keys of up to $d$ malicious users is not enough to create a fraudulent message~\cite{CFFapp,broadcastauth}.  In this scenario, we could again think of embedding families as a way to provide an increase in the number of receivers $n$ and malicious users $d$ that the system can handle.



\section{Embedding Sequences Using Polynomials Over Finite Fields}\label{embeddingsec}

In this section, we present a construction for embedding sequences based on a known construction of $d$-CFFs.
We start by presenting a construction proposed by Erd{\"o}s, Frankl and F{\"u}redi \cite{erdospoly}, that uses polynomials of degree up to $k$ over a finite field $\mathbb{F}_q$, denoted as $f \in \mathbb{F}_q[x]_{\leq k}$, and generates a $d$-CFF($t=q^2, n=q^{k+1}$) for $d \leq \frac{q-1}{k}$. We also note that this known construction presents some interesting properties that allow us to ignore a few rows of the $d$-CFF if we need smaller values of $d$ (see Theorem \ref{smallercff}). We finally show how to use this polynomial construction to obtain embedding families, and how we can focus on prioritizing increases of $d$, $n$, and obtain monotone families with increasing compression ratio from it.

\begin{construction}[Erd{\"o}s et al. \cite{erdospoly}]\label{constpoly}
Let $q$ be a prime power, $k$ a positive integer, and consider the elements of the finite field as $\mathbb{F}_q = \{x_1, \ldots, x_q\}$. We define ($X, \mathcal{B}$) as follows, for each polynomial $f \in \mathbb{F}_q[x]_{\leq k}$.

\[X = \mathbb{F}_q \times \mathbb{F}_q = \{(x_i,x_j): i,j = 1, \ldots, q\},\]
\[B_f = \{(x_1, f(x_1)), \ldots, (x_q, f(x_q))\} \subset X,\]
\[\mathcal{B} = \{B_f: f \in \mathbb{F}_q[x]_{\leq k}\}.\] 

Call $C_{q,k}$ the incidence matrix obtained.
\end{construction}

The argument in the following proof was observed in \cite{hartung,hors++}.
\begin{theorem}\label{theopoly}[Erd{\"o}s, Frankl, F{\"u}redi (1985)]
Let $q$ be a prime power, $k\geq 1$, and $d \leq \frac{q-1}{k}$. Then $C_{q,k}$ from Construction \ref{constpoly} is a $d$-CFF($q^2, q^{k+1}$).
\end{theorem}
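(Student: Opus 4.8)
The plan is to verify the cover-free property~\eqref{property:cff} directly from the definition, the whole argument resting on the fact that two distinct low-degree polynomials can agree on only a few points. First I would pin down the dimensions. The row set is $X = \mathbb{F}_q \times \mathbb{F}_q$, so $t = |X| = q^2$, while the blocks are indexed by $\mathbb{F}_q[x]_{\leq k}$, of which there are exactly $q^{k+1}$ (one free choice in $\mathbb{F}_q$ for each of the $k+1$ coefficients $a_0, \dots, a_k$), giving $n = |\mathcal{B}| = q^{k+1}$. Moreover each block $B_f = \{(x_i, f(x_i)) : 1 \le i \le q\}$ is the graph of $f$ and therefore has exactly $|B_f| = q$ elements, one per value of the first coordinate.

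The key counting step is to show that for distinct $f, g \in \mathbb{F}_q[x]_{\leq k}$ we have $|B_f \cap B_g| \leq k$. A point $(x_i, y)$ lies in $B_f \cap B_g$ exactly when $y = f(x_i) = g(x_i)$, so the intersection is in bijection with the set of common roots of $h := f - g$. Since $f \neq g$, the polynomial $h$ is nonzero of degree at most $k$ over the field $\mathbb{F}_q$, and hence has at most $k$ roots. This is the single place where being over a \emph{field} matters, and it is the heart of the argument.

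Then I would assemble the conclusion by a union bound. Fixing a block $B_{f_0}$ and any $d$ further blocks $B_{f_1}, \dots, B_{f_d}$, the two preceding observations give
\[
\left| B_{f_0} \cap \bigcup_{j=1}^{d} B_{f_j} \right| \;\le\; \sum_{j=1}^{d} \left| B_{f_0} \cap B_{f_j} \right| \;\le\; dk.
\]
The hypothesis $d \le \frac{q-1}{k}$ rearranges to $dk \le q-1 < q = |B_{f_0}|$, so strictly fewer than $|B_{f_0}|$ of its points can be covered; hence at least one point of $B_{f_0}$ lies outside the union, which is precisely $B_{f_0} \nsubseteq \bigcup_{j=1}^{d} B_{f_j}$.

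I do not anticipate a genuine obstacle: the construction is engineered so that this pigeonhole-style count closes exactly. The only mild subtlety is that the union bound overcounts whenever two of the $B_{f_j}$ meet $B_{f_0}$ in common points, but since only one direction of inequality is needed, the crude estimate $dk$ is enough, and the threshold $d \le (q-1)/k$ is exactly what forces $dk < q$.
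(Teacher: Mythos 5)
Your proof is correct and follows essentially the same route as the paper's: bound $|B_{f_0} \cap B_{f_j}| \leq k$ for distinct polynomials, apply a union bound, and use $dk \leq q-1 < q = |B_{f_0}|$ to conclude that some point of $B_{f_0}$ escapes the union. The only difference is that you justify the intersection bound explicitly (via the at most $k$ roots of the nonzero polynomial $f - g$ of degree at most $k$), a step the paper asserts without proof.
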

\begin{proof}
It is easy to see that $|X| = q^2, |B_f| = q,$ and $|\mathcal{B}| = q^{k+1}$. Moreover, for $f_i \neq f_j$ we have $|B_{f_i} \cap B_{f_j}| \leq k$. So when we consider any $d+1$ subsets $B_{f_0}, B_{f_1}, \ldots, B_{f_d}$, we have that 
\[\bigg| B_{f_0} \Big\backslash \bigcup_{i=1}^{d}B_{f_i}\bigg| \geq q- dk \geq 1,\]

\noindent the last inequality due to hypothesis  $dk \leq  q-1$. Thus, $B_{f_0} \not\subseteq \bigcup_{i=1}^{d}B_{f_i}$.
Therefore, ($X, \mathcal{B}$) represents a $d$-CFF($q^2, q^{k+1}$) with $d \leq \frac{q-1}{k}$. 
\end{proof}

As an example, for $q=3, d=2, k=1$, we have $X = \mathbb{F}_3 \times \mathbb{F}_3 = \mathbb{Z}_3 \times \mathbb{Z}_3$, polynomials with coefficients in $\mathbb{F}_3 $ of degree up to $k=1$, and obtain the $2$-CFF($9,9$) in Table~\ref{table:2cff}.

\begin{table}[h!]
	\caption{Example of a $1$-CFF($6,9$) and a $2$-CFF($9,9$).}
	\label{table:2cff}
	\begin{tabular}{c*{9}{>{\centering\arraybackslash}p{.07\linewidth}}}
		& $0$ & $1$ & $2$ & $x$ & $x+1$ & $x+2$ & $2x$ & $2x+1$ & $2x+2$ \\ \cline{2-10}
		\multicolumn{1}{l|}{$(0,0)$} & $1$ & $0$ & $0$ & $1$ & $0$   & $0$   & $1$  & $0$    & \multicolumn{1}{c|}{$0$}    \\ 
		\multicolumn{1}{l|}{$(0,1)$} & $0$ & $1$ & $0$ & $0$ & $1$   & $0$   & $0$  & $1$    & \multicolumn{1}{c|}{$0$}    \\ 
		\multicolumn{1}{l|}{$(0,2)$} & $0$ & $0$ & $1$ & $0$ & $0$   & $1$   & $0$  & $0$    & \multicolumn{1}{c|}{$1$}    \\ 
		\multicolumn{1}{l|}{$(1,0)$} & $1$ & $0$ & $0$ & $0$ & $0$   & $1$   & $0$  & $1$    & \multicolumn{1}{c|}{$0$}    \\ 
		\multicolumn{1}{l|}{$(1,1)$} & $0$ & $1$ & $0$ & $1$ & $0$   & $0$   & $0$  & $0$    & \multicolumn{1}{c|}{$1$}    \\ 
		\multicolumn{1}{l|}{$(1,2)$} & $0$ & $0$ & $1$ & $0$ & $1$   & $0$   & $1$  & $0$    & \multicolumn{1}{c|}{$0$}    \\ \cline{2-10}
		\multicolumn{1}{l|}{$(2,0)$} & $1$ & $0$ & $0$ & $0$ & $1$   & $0$   & $0$  & $0$    & \multicolumn{1}{c|}{$1$}    \\ 
		\multicolumn{1}{l|}{$(2,1)$} & $0$ & $1$ & $0$ & $0$ & $0$   & $1$   & $1$  & $0$    & \multicolumn{1}{c|}{$0$}    \\ 
		\multicolumn{1}{l|}{$(2,2)$} & $0$ & $0$ & $1$ & $1$ & $0$   & $0$   & $0$  & $1$    & \multicolumn{1}{c|}{$0$}    \\ \cline{2-10}
	\end{tabular}
\end{table}

We note that the $d$-CFF construction presented above has a special structure that can be explored to guarantee some interesting properties. 
Here we focus on the property of being able to discard some rows of the $d$-CFF incidence matrix when smaller values of $d$ are enough, and the ability to increase $d$ as necessary (up to a maximum) by considering extra rows.
This property is important because it allows the testing algorithm to do an early abort doing only enough tests to detect the actual defectives $d'$ if $d'<d$, where $d$ is the maximum $\lfloor \frac{q-1}{k} \rfloor$ allowed by the $d$-CFF matrix. In the example above, for instance, if we discard the last three rows we obtain a $1$-CFF($6,9$).

For the remaining of this paper we consider a \emph{block of rows} in this construction as the set of $q$ rows $\{(x_i, x_0),  (x_i, x_1), \ldots, (x_i, x_q) \}$ for every $x_i \in \mathbb{F}_q$. When we restrict our matrix to $i$ blocks of rows, we are considering $X = \{x_1, \ldots, x_i\} \times \mathbb{F}_q$, $B_f(i) = \{(x_1, f(x_1)), \ldots, (x_i, f(x_i))\}$ and $\mathcal{B}(i) = \{B_f(i): f \in \mathbb{F}_q[x]_{\leq k}\}$.


\begin{construction}\label{constIncd}
Let $q$ be a prime power, $k\geq 1$, and $q \geq dk+1$. Let $C_{q,k,d}$ be the matrix corresponding to $\mathcal{B}(dk+1)$, or in other words, the matrix $C_{q,k}$ from Construction \ref{constpoly} restricted to the first $(dk+1)$ blocks of rows.
\end{construction}

\begin{theorem}\label{smallercff}
	Let $q$ a prime power, $k\geq 1$, and $q \geq dk+1$. Then $C_{q,k,d}$ from Construction \ref{constIncd} is a $d$-CFF($(dk+1)q, q^{k+1}$).
\end{theorem}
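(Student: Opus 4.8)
The plan is to mirror the proof of Theorem~\ref{theopoly} almost verbatim, observing that restricting to the first $dk+1$ blocks of rows (which exist precisely because $q \geq dk+1$) shrinks each column support from $q$ points to $dk+1$ points while leaving the pairwise intersection bound untouched. First I would record the dimensions. By construction $X = \{x_1,\ldots,x_{dk+1}\}\times\mathbb{F}_q$, so the matrix has exactly $(dk+1)q$ rows. For the columns, I would note that each $B_f(dk+1)$ records the values of $f$ at the $dk+1$ points $x_1,\ldots,x_{dk+1}$; since $dk+1\geq k+1$ (using $d\geq 1$), these values determine $f\in\mathbb{F}_q[x]_{\leq k}$ uniquely, so the map $f\mapsto B_f(dk+1)$ is injective and $|\mathcal{B}(dk+1)| = q^{k+1}$, giving $n = q^{k+1}$ columns.

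Second I would re-establish the intersection bound in the restricted setting. For distinct polynomials $f_i \neq f_j$ of degree at most $k$, the difference $f_i - f_j$ is a nonzero polynomial of degree at most $k$, hence has at most $k$ roots in $\mathbb{F}_q$. A common point of $B_{f_i}(dk+1)$ and $B_{f_j}(dk+1)$ must occur at a shared first coordinate $x_m\in\{x_1,\ldots,x_{dk+1}\}$ with $f_i(x_m)=f_j(x_m)$, i.e.\ at a root of $f_i-f_j$, so
\[ |B_{f_i}(dk+1)\cap B_{f_j}(dk+1)| \leq k. \]
The crucial point is that passing from all $q$ evaluation points to the first $dk+1$ can only remove agreement points, so the bound $\leq k$ carries over unchanged.

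Finally I would run the same counting argument as before. Each restricted block has size $|B_f(dk+1)| = dk+1$, so for any $d+1$ distinct columns $B_{f_0}(dk+1),B_{f_1}(dk+1),\ldots,B_{f_d}(dk+1)$,
\[ \Bigl| B_{f_0}(dk+1)\setminus\bigcup_{i=1}^{d}B_{f_i}(dk+1)\Bigr| \geq (dk+1) - \sum_{i=1}^{d}|B_{f_0}(dk+1)\cap B_{f_i}(dk+1)| \geq (dk+1) - dk = 1, \]
so $B_{f_0}(dk+1)\not\subseteq\bigcup_{i=1}^{d}B_{f_i}(dk+1)$, which is exactly the $d$-CFF property~\eqref{property:cff}.

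I do not expect a genuine obstacle here. The only two things that need care are the injectivity of $f\mapsto B_f(dk+1)$, which guarantees the restricted matrix still has $q^{k+1}$ distinct columns and hinges on the inequality $dk+1\geq k+1$, and the observation that truncating the evaluation set cannot increase any pairwise intersection. Everything else is the identical union bound from Theorem~\ref{theopoly}, with $q$ replaced by the block count $dk+1$ as the size of each column support.
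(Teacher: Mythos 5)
Your proof is correct and follows essentially the same route as the paper: the paper likewise observes that the pairwise intersection bound $|B_{f_i}(a)\cap B_{f_j}(a)|\leq k$ survives the restriction to $dk+1$ blocks of rows and then runs the identical union bound $(dk+1)-dk\geq 1$. Your additional remarks (injectivity of $f\mapsto B_f(dk+1)$ via $dk+1\geq k+1$, guaranteeing $q^{k+1}$ distinct columns) are details the paper leaves implicit, and they only strengthen the write-up.
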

\begin{proof}
	The proof follows a similar argument as for Theorem \ref{theopoly}. We have $|B_{f_i}(a) \cap B_{f_j}(a)| \leq k$ for all $f_i,f_j \in \mathbb{F}_q[x]_{\leq k}$ and $1 \leq a \leq q$. Taking any $d+1$ distinct sets $B_{f_0}(dk+1), B_{f_1}(dk+1), \ldots, B_{f_d}(dk+1)$, we have $|B_{f_0}(dk+1) \backslash \bigcup_{i=1}^{d}B_{f_i}(dk+1)| \geq dk+1- dk \geq 1$. So $\mathcal{B}(dk+1)$ has the $d$-cover-free property and $C_{q,k,d}$ is a $d$-CFF($(dk+1)q, q^{k+1}$).
%
\end{proof}

For the case of $k=1$, we observe that this incremental $d$ property was given in \cite{PPS}. This is because the constructions presented in ~\cite{PPS} are based on Mutually Orthogonal Latin Squares (MOLS), which can be constructed with polynomials of degree $k=1$.

\subsection{Embedding Sequence Construction}\label{embeddingconstsec}

%

In this section, we give constructions of embedding sequences of CFFs using the previous construction and extension fields.
We start with a prime power $q$ and consider the increase as $q^{2^i}$ for $i \geq 0$, which gives a direct increase of $n$ and $t$.
Since we are increasing $q$, we may also consider to increase $k$ and/or $d$ as long as we respect the inequality $q \geq dk +1$. By increasing $k$ to some $k'$ we make $n$ grow faster and consequently improve the compression ratio. By increasing $d$ to $d'$ we can allow the identification of more defective elements, which may be necessary as the number of elements $n$ grows.

The following theorem is the basic step to be used in the embedding sequence construction.

\begin{theorem}\label{theoinc}
Let $q\geq dk +1, k' \geq k, d' \geq d$ and $q^2 \geq d'k' +1$. Let $C_{q,k,d}$ and $C_{q^2,k',d'}$ be the CFF matrices obtained from the polynomial construction (Construction \ref{constIncd}). Then, there exists $\mathbf{C}_{q^2,k',d'}$ obtained from $C_{q^2,k',d'}$ by a column and row permutation that has the form

	\[\mathbf{C}_{q^2,k',d'}= \begin{pmatrix} C_{q,k,d} & Y\\ Z & W \end{pmatrix}.\]
	
\noindent	Moreover, $\mathbf{C}_{q^2,k',d'}$ is a $d'$-CFF($(d'k'+1)q^2, q^{2(k'+1)}$) and $C_{q,k,d}$ is a $d$-CFF($(dk+1)q, q^{(k+1)}$).
\end{theorem}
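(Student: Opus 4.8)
The plan is to exhibit the claimed block form by choosing \emph{compatible} orderings of the field elements (the rows) and of the polynomials (the columns), and then to check that the top-left corner literally reproduces $C_{q,k,d}$, the key reason being that polynomial evaluation respects the subfield $\mathbb{F}_q \subseteq \mathbb{F}_{q^2}$.

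First I would set up the identification of the small objects inside the large ones. Fix an enumeration $\mathbb{F}_{q^2} = \{x_1, \ldots, x_{q^2}\}$ whose first $q$ entries $x_1, \ldots, x_q$ are exactly the elements of $\mathbb{F}_q$, listed in the order used by $C_{q,k,d}$. On the column side, every $f \in \mathbb{F}_q[x]_{\le k}$ is also an element of $\mathbb{F}_{q^2}[x]_{\le k'}$, since $k \le k'$ and the coefficients lie in $\mathbb{F}_q \subseteq \mathbb{F}_{q^2}$; this embeds the $q^{k+1}$ columns of $C_{q,k,d}$ into the $q^{2(k'+1)}$ columns of $C_{q^2,k',d'}$. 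On the row side, the points $(x_i, x_j)$ of $C_{q,k,d}$, with $1 \le i \le dk+1$ and $x_j \in \mathbb{F}_q$, are rows of $C_{q^2,k',d'}$: since $d \le d'$ and $k \le k'$ give $dk+1 \le d'k'+1$, the blocks of $C_{q^2,k',d'}$ (indexed by $x_1, \ldots, x_{d'k'+1}$) contain those indexed by $x_1, \ldots, x_{dk+1}$, and each such block contains the $q$ rows whose second coordinate lies in $\mathbb{F}_q$.

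Next I would define the permutation. Reorder the columns of $C_{q^2,k',d'}$ so that the $q^{k+1}$ columns coming from $\mathbb{F}_q[x]_{\le k}$ appear first, and reorder the rows so that the $(dk+1)q$ rows $(x_i, x_j)$ with $i \le dk+1$ and $x_j \in \mathbb{F}_q$ appear first; call the resulting matrix $\mathbf{C}_{q^2,k',d'}$. It then remains to verify that the top-left $(dk+1)q \times q^{k+1}$ corner equals $C_{q,k,d}$. For such a row $(x_i, x_j)$ and column $f$, the entry of $\mathbf{C}_{q^2,k',d'}$ is $1$ exactly when $f(x_i) = x_j$ evaluated in $\mathbb{F}_{q^2}$. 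Here is the one point that needs care: since $x_i \in \mathbb{F}_q$ and $f$ has coefficients in $\mathbb{F}_q$, the value $f(x_i)$ already lies in $\mathbb{F}_q$, and the equation $f(x_i) = x_j$ holds in $\mathbb{F}_{q^2}$ if and only if it holds in $\mathbb{F}_q$. Thus the entry matches that of $C_{q,k,d}$, so the corner is exactly $C_{q,k,d}$, with $Y$, $Z$, $W$ being the remaining blocks.

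Finally, since $\mathbf{C}_{q^2,k',d'}$ differs from $C_{q^2,k',d'}$ only by a row and column permutation, and the $d'$-cover-free property is invariant under such permutations, Theorem~\ref{smallercff} applied to $C_{q^2,k',d'}$ (valid because $q^2 \ge d'k'+1$) shows $\mathbf{C}_{q^2,k',d'}$ is a $d'$-CFF$((d'k'+1)q^2, q^{2(k'+1)})$; applying the same theorem to $C_{q,k,d}$ (valid because $q \ge dk+1$) shows it is a $d$-CFF$((dk+1)q, q^{k+1})$. The only genuine obstacle is the bookkeeping that keeps the row and column orderings mutually consistent, together with the subfield-evaluation identity; once these are in place, the cover-free conclusions follow immediately from Theorem~\ref{smallercff}.
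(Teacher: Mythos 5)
Your proof is correct and takes essentially the same approach as the paper: permute rows and columns of $C_{q^2,k',d'}$ so that the rows $(x_i,x_j)$ with $x_i$ among the first $dk+1$ elements of $\mathbb{F}_q$ and the columns indexed by $\mathbb{F}_q[x]_{\le k}$ come first, use the subfield inclusion $\mathbb{F}_q \subseteq \mathbb{F}_{q^2}$ (so that polynomial evaluations agree) to identify the upper-left corner with $C_{q,k,d}$, and invoke Theorem~\ref{smallercff} for both cover-free claims. If anything, your write-up is slightly more careful than the paper's, since you explicitly check $dk+1 \le d'k'+1$ and justify the evaluation identity.
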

\begin{proof}
	To form $\mathbf{C}_{q^2,k',d'}$ we first list the rows of $C_{q^2,k',d'}$ that are of the form $(x_i,x_j)$ for all $1 \leq i \leq dk+1$, $1 \leq j \leq q$ and its columns indexed by $B_f$ for all $f \in \mathbb{F}_q[x]_{\leq k}$, followed by the remaining rows and columns in some order. Since $\mathbb{F}_q$ is a subfield of $\mathbb{F}_{q^2}$, $\mathbb{F}_q[x]_{\leq k} \subseteq \mathbb{F}_{q^2}[x]_{\leq k'}$, so we can list the columns starting by all $f \in \mathbb{F}_q[x]_{\leq k}$ followed by $f \in \mathbb{F}_{q^2}[x]_{\leq k'} \setminus \mathbb{F}_q[x]_{\leq k}$, and the evaluations of polynomials in $\mathbb{F}_q$ and $\mathbb{F}_{q^2}$ give the same result. 
	Thus, the $(dk+1)q \times q^{k+1}$ submatrix of $\mathbf{C}_{q^2,k',d'}$ in the upper left corner coincides precisely with $C_{q,k,d}$. The fact they are $d$-CFF and $d'$-CFF comes from Theorem \ref{smallercff}.
\end{proof}	

As an example, consider $q=9, d=2$ and $k=1$, and increase the last two parameters to $d'=4, k'=2$. We are able to increase $C_{3,1,2}$ from Table~\ref{table:2cff} and obtain a $4$-CFF($81,729$) $\mathbf{C}_{9,2,4}$, as shown in Table~\ref{table:extensionField}.

\begin{table}[h!]
	\tiny
	\caption{Example of a $4$-CFF($81,729$)}
	\label{table:extensionField}
	\begin{tabular}{ccccccccccclll}
		& $0$                     & $1$                     & $2$                     & $x$                     & $x+1$                   & $x+2$ & $2x$ & $2x+1$ & $2x+2$                   & \multicolumn{1}{l}{$\alpha$} & $\alpha+1$ & $\ldots$ & \begin{tabular}[c]{@{}l@{}}$(2\alpha + 2)x^2 +$ \\ $(2\alpha + 2)x +$ \\ $2\alpha + 2$\end{tabular} \\ \cline{2-14} 
		\multicolumn{1}{c|}{$(0,0)$}                                                               & $1$                     & $0$                     & $0$                     & $1$                     & $0$                     & $0$   & $1$  & $0$    & \multicolumn{1}{c|}{$0$} & \multicolumn{4}{c|}{\multirow{14}{*}{$\Huge{\ldots}$}}                                                                                                 \\
		\multicolumn{1}{c|}{$(0,1)$}                                                               & $0$                     & $1$                     & $0$                     & $0$                     & $1$                     & $0$   & $0$  & $1$    & \multicolumn{1}{c|}{$0$} & \multicolumn{4}{c|}{}                                                                                                                                  \\
		\multicolumn{1}{c|}{$(0,2)$}                                                               & $0$                     & $0$                     & $1$                     & $0$                     & $0$                     & $1$   & $0$  & $0$    & \multicolumn{1}{c|}{$1$} & \multicolumn{4}{c|}{}                                                                                                                                  \\
		\multicolumn{1}{c|}{$(1,0)$}                                                               & $1$                     & $0$                     & $0$                     & $0$                     & $0$                     & $1$   & $0$  & $1$    & \multicolumn{1}{c|}{$0$} & \multicolumn{4}{c|}{}                                                                                                                                  \\
		\multicolumn{1}{c|}{$(1,1)$}                                                               & $0$                     & $1$                     & $0$                     & $1$                     & $0$                     & $0$   & $0$  & $0$    & \multicolumn{1}{c|}{$1$} & \multicolumn{4}{c|}{}                                                                                                                                  \\
		\multicolumn{1}{c|}{$(1,2)$}                                                               & $0$                     & $0$                     & $1$                     & $0$                     & $1$                     & $0$   & $1$  & $0$    & \multicolumn{1}{c|}{$0$} & \multicolumn{4}{c|}{}                                                                                                                                  \\
		\multicolumn{1}{c|}{$(2,0)$}                                                               & $1$                     & $0$                     & $0$                     & $0$                     & $1$                     & $0$   & $0$  & $0$    & \multicolumn{1}{c|}{$1$} & \multicolumn{4}{c|}{}                                                                                                                                  \\
		\multicolumn{1}{c|}{$(2,1)$}                                                               & $0$                     & $1$                     & $0$                     & $0$                     & $0$                     & $1$   & $1$  & $0$    & \multicolumn{1}{c|}{$0$} & \multicolumn{4}{c|}{}                                                                                                                                  \\
		\multicolumn{1}{c|}{$(2,2)$}                                                               & $0$                     & $0$                     & $1$                     & $1$                     & $0$                     & $0$   & $0$  & $1$    & \multicolumn{1}{c|}{$0$} & \multicolumn{4}{c|}{}                                                                                                                                  \\ \cline{2-10}
		\multicolumn{1}{c|}{\multirow{3}{*}{$(\mathbb{F}_3,\mathbb{F}_9 \setminus \mathbb{F}_3)$}} & \multicolumn{9}{c|}{\multirow{3}{*}{\large{0}}}                                                                                                                                            & \multicolumn{4}{c|}{}                                                                                                                                  \\
		\multicolumn{1}{c|}{}                                                                      & \multicolumn{9}{c|}{}                                                                                                                                                              & \multicolumn{4}{c|}{}                                                                                                                                  \\
		\multicolumn{1}{c|}{}                                                                      & \multicolumn{9}{c|}{}                                                                                                                                                              & \multicolumn{4}{c|}{}                                                                                                                                  \\ \cline{2-10}
		\multicolumn{1}{c|}{$\vdots$}                                                              & \multicolumn{9}{c}{$\ldots$}                                                                                                                                                      & \multicolumn{4}{c|}{}                                                                                                                                  \\
		\multicolumn{1}{c|}{$(2\alpha+2,2\alpha+2)$}                                               & \multicolumn{1}{l}{$0$} & \multicolumn{1}{l}{$0$} & \multicolumn{1}{l}{$0$} & \multicolumn{1}{l}{$1$} & \multicolumn{1}{l}{$0$} & \multicolumn{4}{c}{$\ldots$}                     & \multicolumn{4}{c|}{}                                                                                                                                  \\ \cline{2-14} 
	\end{tabular}
\end{table}

Theorem \ref{theoinc} allows the construction of infinite families of CFF with special properties, namely monotone families, and embedding families with increasing $d$ and/or $k$ which have specific advantages.

\begin{theorem}\label{theofamily}
Let $q$ be a prime power, $q \geq d_0k_0 +1$.	Let $k_i \leq k_{i+1}, d_i \leq d_{i+1}$, $q^{2^{i}} \geq d_{i+1}k_{i+1}+1$,  for all $i \geq 0$. Then, the sequence $\{\mathbf{C}_{q^{2^{i}}, k_i, d_i}\}_{i \geq 0}$ is an embedding family of CFFs.
\end{theorem}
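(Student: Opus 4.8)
The plan is to verify the three conditions in Definition~\ref{embeddingfamily} for the sequence $\{\mathbf{C}_{q^{2^{i}}, k_i, d_i}\}_{i \geq 0}$, relying on Theorem~\ref{theoinc} to supply the crucial block-decomposition between consecutive terms. The key observation is that Theorem~\ref{theoinc} is stated precisely for a single step from field size $q$ to $q^2$, so the whole argument reduces to applying it at each index $i$ with the substitution $q \mapsto q^{2^{i}}$, $(k,d) \mapsto (k_i, d_i)$, and $(k',d') \mapsto (k_{i+1}, d_{i+1})$.

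First I would check that the hypotheses of Theorem~\ref{theoinc} hold at every step $i \geq 0$. The theorem requires $q^{2^{i}} \geq d_i k_i + 1$, $k_{i+1} \geq k_i$, $d_{i+1} \geq d_i$, and $(q^{2^{i}})^2 \geq d_{i+1} k_{i+1} + 1$. The monotonicity conditions $k_{i+1} \geq k_i$ and $d_{i+1} \geq d_i$ are given directly. The condition $(q^{2^{i}})^2 = q^{2^{i+1}} \geq d_{i+1}k_{i+1}+1$ is exactly the stated hypothesis. For the lower bound $q^{2^{i}} \geq d_i k_i + 1$ I would argue by induction: the base case $i=0$ is the hypothesis $q \geq d_0 k_0 + 1$, and for $i \geq 1$ the hypothesis at index $i-1$ gives $q^{2^{i}} \geq d_i k_i + 1$, so every step's premises are met.

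Next, applying Theorem~\ref{theoinc} at step $i$ yields, after a row and column permutation, the block form
\[
\mathbf{C}_{q^{2^{i+1}}, k_{i+1}, d_{i+1}} = \begin{pmatrix} \mathbf{C}_{q^{2^{i}}, k_i, d_i} & Y \\ Z & W \end{pmatrix},
\]
which is precisely the matrix-embedding requirement of Definition~\ref{embeddingfamily}. The same theorem certifies that $\mathbf{C}_{q^{2^{i}}, k_i, d_i}$ is a $d_i$-CFF and $\mathbf{C}_{q^{2^{i+1}}, k_{i+1}, d_{i+1}}$ is a $d_{i+1}$-CFF, so each term of the sequence is a cover-free family of the required strength, and $d(i) = d_i$ satisfies $d_i \leq d_{i+1}$. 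The dimension inequalities $\text{rows}(i) \leq \text{rows}(i+1)$ and $\text{cols}(i) \leq \text{cols}(i+1)$ follow from the explicit parameters: rows go from $(d_i k_i + 1)q^{2^{i}}$ to $(d_{i+1}k_{i+1}+1)q^{2^{i+1}}$ and columns from $q^{2^{i}(k_i+1)}$ to $q^{2^{i+1}(k_{i+1}+1)}$, both increasing since $q \geq 2$ and all parameters are nondecreasing. Finally, the ground-set nesting $X_i \subseteq X_{i+1}$ holds because $X_i = \{x_1,\ldots,x_{d_ik_i+1}\} \times \mathbb{F}_{q^{2^{i}}}$ embeds into $X_{i+1}$ via the subfield inclusion $\mathbb{F}_{q^{2^{i}}} \subseteq \mathbb{F}_{q^{2^{i+1}}}$, exactly the identification used inside the proof of Theorem~\ref{theoinc}.

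I do not expect a genuine obstacle here, since Theorem~\ref{theoinc} already packages the only nontrivial content—the compatibility of the two polynomial constructions across an extension field. The one point requiring slight care is the inductive verification that $q^{2^{i}} \geq d_i k_i + 1$ holds for all $i$ rather than only at $i=0$, and the observation that consecutive steps chain correctly because the upper-left block produced at step $i$ is literally the matrix fed into step $i+1$; making this telescoping explicit is the main thing to get right.
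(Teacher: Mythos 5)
Your proof is correct and follows essentially the same route as the paper: the paper likewise builds the base matrix $C_{q,k_0,d_0}$ from the polynomial construction (Theorem~\ref{smallercff}) and then applies Theorem~\ref{theoinc} at each index $i$ to embed $\mathbf{C}_{q^{2^{i}},k_i,d_i}$ into $\mathbf{C}_{q^{2^{i+1}},k_{i+1},d_{i+1}}$. Your explicit verification of the step-$i$ hypotheses (in particular the chained inequality $q^{2^{i}} \geq d_ik_i+1$) and of the remaining conditions of Definition~\ref{embeddingfamily} only spells out what the paper leaves implicit.
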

\begin{proof}
For $q \geq d_0k_0 +1$ we build a $d_0$-CFF $C_{q, k_0, d_0}$ using the polynomial construction (Construction \ref{constpoly}, Theorem \ref{smallercff}).
Since $k_i \leq k_{i+1}, d_i \leq d_{i+1}, q^{2^{i}} \geq d_{i+1}k_{i+1}+1$, for each $i \geq 0$, we apply Theorem \ref{theoinc} to embed $\mathbf{C}_{q^{2^{i}}, k_i, d_i}$ in $\mathbf{C}_{q^{2^{i+1}}, k_{i+1}, d_{i+1}}$.

\end{proof}	

The following corollaries show useful applications of Theorem \ref{theofamily}.

\begin{corollary}[Prioritizing $d$ increase]\label{dinc}
	Let $d_0\geq 1$, $k \geq 1$, and let $q$ be a prime power such that $q > d_0k$. Let $d_i = \lceil \frac{q^{2^i}}{k}\rceil - 1$, for $i\geq 1$. Then $\{\mathbf{C}_{q^{2^i}, k, d_i}\}_{i \geq 0}$ is an embedding family of CFFs. Moreover, its compression ratio is $\rho(n) = n^{1-\frac{2}{k+1}}$ and $d \sim \frac{n^{1/k+1}}{k}$.
\end{corollary}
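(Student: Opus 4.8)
The plan is to verify that the proposed sequence satisfies all the hypotheses of Theorem~\ref{theofamily}, and then to compute the compression ratio and the growth rate of $d$ as explicit functions of $n$. First I would check the embedding hypotheses. Here $k$ is held constant, so trivially $k_i = k \leq k_{i+1} = k$. For the defective parameter, I must confirm that $d_i = \lceil q^{2^i}/k\rceil - 1$ is nondecreasing in $i$ and that the key inequality $q^{2^i} \geq d_{i+1}k + 1$ holds for every $i \geq 0$. The monotonicity of $d_i$ is immediate since $q^{2^i}$ grows with $i$. The inequality $q^{2^i} \geq d_{i+1}k+1$ should follow by plugging in $d_{i+1} = \lceil q^{2^{i+1}}/k \rceil - 1$ and checking that $(\lceil q^{2^{i+1}}/k\rceil - 1)k + 1 \leq q^{2^i}$; the base case $i=0$ uses the hypothesis $q > d_0 k$. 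Once these are established, Theorem~\ref{theofamily} immediately yields that $\{\mathbf{C}_{q^{2^i},k,d_i}\}_{i\geq 0}$ is an embedding family.

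Next I would turn to the compression ratio. For the $i$-th matrix, Theorem~\ref{theoinc} gives $n = q^{2^i(k+1)}$ columns and $t = (d_ik+1)q^{2^i}$ rows. The plan is to express $t$ as a function of $n$. Since $d_i \sim q^{2^i}/k$, we have $d_ik + 1 \sim q^{2^i}$, so $t \sim q^{2^i}\cdot q^{2^i} = q^{2\cdot 2^i}$. Writing $Q = q^{2^i}$, we get $n = Q^{k+1}$ and $t \sim Q^2$, hence $Q \sim n^{1/(k+1)}$ and $t \sim n^{2/(k+1)}$. The compression ratio is then
\[
\rho(n) = \frac{n}{t(n)} \sim \frac{n}{n^{2/(k+1)}} = n^{1-\frac{2}{k+1}},
\]
as claimed. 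For the growth of $d$, from $d \sim Q/k \sim n^{1/(k+1)}/k$ I recover $d \sim \frac{n^{1/(k+1)}}{k}$.

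The main obstacle will be handling the ceiling function carefully in the inequality $q^{2^i} \geq d_{i+1}k+1$, since $d_{i+1} = \lceil q^{2^{i+1}}/k\rceil - 1$ and I need to control the rounding against the exponential gap between $q^{2^i}$ and $q^{2^{i+1}} = (q^{2^i})^2$. The favorable point is that the gap is quadratic in $q^{2^i}$, which is enormous compared to the $O(k)$ error introduced by the ceiling, so the inequality should hold with large slack for all $i \geq 1$; only the base case $i=0$ is delicate and is precisely what the strict hypothesis $q > d_0 k$ is designed to secure. A secondary point worth care is that the asymptotic claims $\rho(n) = n^{1-2/(k+1)}$ and $d \sim n^{1/(k+1)}/k$ are stated along the sparse sequence of values $n = q^{2^i(k+1)}$ rather than for all integers $n$, so I would make explicit that these are the achievable compression ratios at the embedding breakpoints, with the constants absorbed into the $O(\cdot)$ of the compression-ratio definition given earlier.
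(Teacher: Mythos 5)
Your computation of the compression ratio and of $d\sim n^{1/(k+1)}/k$ is fine and matches the paper, but the hypothesis-checking step --- the heart of the proof --- rests on a false inequality. You commit to proving $q^{2^i} \geq d_{i+1}k+1$ for all $i\geq 0$ and assert that it ``holds with large slack'' because the gap between $q^{2^i}$ and $q^{2^{i+1}}=(q^{2^i})^2$ is quadratic. The slack runs in the opposite direction: with $d_{i+1}=\lceil q^{2^{i+1}}/k\rceil -1$ we get $d_{i+1}k+1 \geq q^{2^{i+1}}-k+1 = (q^{2^i})^2-k+1$, and since $q>d_0k\geq k$ forces $q^{2^i}(q^{2^i}-1)\geq k$, this exceeds $q^{2^i}$ for every $i\geq 0$. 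So the cross-level inequality you set out to verify fails for all $i$, including the base case $i=0$: the hypothesis $q>d_0k$ gives $q\geq d_0k+1$, which says nothing about $d_1\approx q^2/k$. Had you carried out the ``careful handling of the ceiling'' that you yourself flagged as the main obstacle, the plan would have collapsed at exactly that point.

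The repair is to verify the same-level condition instead, which is what the paper does and what the construction actually needs. The condition displayed in Theorem \ref{theofamily} contains an index slip: its proof consists of applying Theorem \ref{theoinc} with base field $\mathbb{F}_{q^{2^i}}$ and extension $\mathbb{F}_{q^{2^{i+1}}}$, and that application requires exactly $q^{2^i}\geq d_ik_i+1$ and $q^{2^{i+1}}\geq d_{i+1}k_{i+1}+1$, i.e.\ each level satisfies its own CFF constraint. For the choice in the corollary this is a one-line check, and it is precisely the paper's proof: $d_i=\lceil q^{2^i}/k\rceil-1 < q^{2^i}/k$, hence $d_ik<q^{2^i}$, i.e.\ $q^{2^i}\geq d_ik+1$ (with $q\geq d_0k+1$ at level $0$ by hypothesis). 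With that substitution, your remaining steps --- monotonicity of $d_i$, $t=(d_ik+1)q^{2^i}\sim (q^{2^i})^2$, $n=(q^{2^i})^{k+1}$, hence $\rho(n)=n^{1-\frac{2}{k+1}}$ and $d\sim n^{1/(k+1)}/k$ along the sequence of breakpoints --- go through as you wrote them.
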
	
\begin{proof}
	We have that $d_{i}= \lceil \frac{q^{2^{i}}}{k}\rceil - 1 < \frac{q^{2^{i}}}{k}$ and therefore $d_ik <  q^{2^{i}}$, which satisfies the hypothesis of Theorem \ref{theofamily}.
 Finally, for fixed $k$ and assuming the use of all rows of the matrix, we easily calculate the compression ratio $\frac{n}{t} = \frac{(q^{2^i})^{k+1}}{(q^{2^i})^2} = \frac{n}{n^{2/k+1}} = n^{1-\frac{2}{k+1}}$, which is increasing when $k \geq 2$.
\end{proof}	

We show a few examples in Table \ref{table:k=2} and Table \ref{table:k=3} for $q = 4, 16, 256, 65536$ and for fixed values of $k$. For each $q$ and $k$ we compute $d = \lceil \frac{q}{k}\rceil - 1$ and $n = q^{k+1}$. We note that as $k$ increases, the maximum value of $d$ decreases but we get constructions with a better ratio. 


\begin{table}[]
	\caption{Example of prioritizing $d$ increases with fixed $k = 2$. }
	\centering
	\label{table:k=2}
	\begin{tabular}{r|r|r|r|r|r|r}
		\hline
		i & q & k & d & n  & t & n/t\\ \hline
		0                      & 4     & 2 & 1     & 64              & 12     &  5.33              \\
		1                      & 16    & 2 & 7     & 4096            & 240    &  17.06              \\
		2                      & 256   & 2 & 127   & 16777216        & 65280    &  257.00        \\
		3                      & 65536 & 2 & 32767 & 281474976710656 & 4294901760  &  65537.00          
	\end{tabular}
\end{table}

\begin{table}[]
	\caption{Example of prioritizing $d$ increases with fixed $k = 3$. }
	\centering
	\label{table:k=3}
	\begin{tabular}{r|r|r|r|r|r|r}
		\hline
		i & q     & k & d     & n           & t   & n/t       \\\hline
		0 & 4     & 3 & 1     & 256         & 16     &  16  \\
		1 & 16    & 3 & 5     & 65536       & 256   &  256   \\
		2 & 256   & 3 & 85    & 4294967296  & 65536  &  65536  \\
		3 & 65536 & 3 & 21845 & $65536^4$ & 4294967296 &  4294967296
	\end{tabular}
\end{table}

\begin{corollary}[Prioritizing ratio increase]\label{kinc}
		Let $d\geq 1$ and $k_0 \geq 1$, $q$ a prime power such that $q > dk_0$. Let $k_i = \lceil \frac{q^{2^i}}{d}\rceil - 1$. Then $\{\mathbf{C}_{q^{2^i}, k_i, d}\}_{i \geq 0}$ is an embedding family of CFFs. Moreover, the compression ratio is $\rho(n)= \frac{n}{\log n}$.
\end{corollary}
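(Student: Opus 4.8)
The plan is to read off both claims from Theorem~\ref{theofamily} applied to the constant sequence $d_i=d$ together with $k_i=\lceil q^{2^i}/d\rceil-1$, and then to compute the parameters $t$ and $n$ explicitly. First I would verify the hypotheses of Theorem~\ref{theofamily}. The base inequality $q\geq d_0k_0+1$ is exactly the assumption $q>dk_0$; the monotonicity $d_i\leq d_{i+1}$ is immediate since $d$ is constant; and $k_i\leq k_{i+1}$ holds because $q^{2^i}$ is strictly increasing in $i$, so $\lceil q^{2^i}/d\rceil$ is nondecreasing (the initial step $k_0\leq k_1$ again uses $q>dk_0$). For the admissibility inequality I would argue exactly as in Corollary~\ref{dinc}: from $k_i=\lceil q^{2^i}/d\rceil-1<q^{2^i}/d$ we get $dk_i<q^{2^i}$, i.e.\ $dk_i+1\leq q^{2^i}$, so each $\mathbf{C}_{q^{2^i},k_i,d}$ is a genuine $d$-CFF by Theorem~\ref{smallercff} and consecutive terms embed by Theorem~\ref{theoinc}. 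This settles the embedding-family claim.

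For the compression ratio, write $Q=q^{2^i}$. By Theorem~\ref{smallercff} the term $\mathbf{C}_{Q,k_i,d}$ has $n=Q^{k_i+1}$ columns and $t=(dk_i+1)Q$ rows. The quantitative heart of the argument is to pin down $t$: since $k_i=\lceil Q/d\rceil-1$ we have $Q-d+1\leq dk_i+1\leq Q$, so $t=\Theta(Q^2)$ with the clean upper bound $t\leq Q^2$. Writing $Q=n^{1/(k_i+1)}$ turns this into $t\leq Q^2=n^{2/(k_i+1)}$, exactly as in Corollary~\ref{dinc}, so the exact compression of the $i$-th term is $n/t=\Theta\!\left(n^{\,1-2/(k_i+1)}\right)$.

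The crux, and the step I expect to be the main obstacle, is relating this to $n/\log n$. Since $d$ is fixed while $k_i+1=\lceil Q/d\rceil\to\infty$ as $i\to\infty$, the exponent $1-2/(k_i+1)\to1$ and $Q=n^{1/(k_i+1)}=n^{o(1)}$, so $t=\Theta(Q^2)=n^{o(1)}$ and the compression $n/t=n^{1-o(1)}$ has the same leading order as the information-theoretic optimum $n/\log n=n^{1-o(1)}$ at constant $d$. For the bound in the exact form stated, under the convention that $\rho(n)$ is any function with $n/t=O(\rho(n))$, I would note $t\geq\log n$ in one line: $\log n=(k_i+1)\log Q\leq(dk_i+1)Q=t$, using $k_i+1\leq dk_i+1$ and $\log Q\leq Q$, whence $n/t\leq n/\log n$. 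The delicate point to handle honestly is that $t=\Theta(Q^2)$ is in fact $\omega(\log n)$ (one computes $t=\Theta((Q/\log Q)\log n)$), so the construction does not attain $t=O(\log n)$, and the correct upper control on the number of tests is the weaker statement $t=n^{o(1)}$. I would therefore present the two one-directional facts explicitly: $t=n^{o(1)}$, giving the lower estimate $n/t=n^{1-o(1)}$ on the compression and matching the exponent of the information-theoretic optimum; and $t\geq\log n$, giving $n/t=O(n/\log n)$. Together these make precise the sense in which the family achieves the stated ratio $\rho(n)=n/\log n$.
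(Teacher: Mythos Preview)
Your proof is correct and follows essentially the same route as the paper: verify the hypotheses of Theorem~\ref{theofamily} via $dk_i<q^{2^i}$, then bound $t$ by $Q^2$ with $Q=q^{2^i}$ and compare against $\log n=(k_i+1)\log Q$ to obtain $n/t=O(n/\log n)$. Your version is in fact more careful than the paper's---you work with $t=(dk_i+1)Q$ rather than the full $Q^2$, and your explicit observation that the bound is one-sided (i.e., $t=\omega(\log n)$, so the construction only achieves $n/t=n^{1-o(1)}$ rather than $t=\Theta(\log n)$) is a genuine clarification that the paper omits.
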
	
\begin{proof}
		We have that $k_{i}= \lceil \frac{q^{2^{i}}}{d}\rceil - 1 < \frac{q^{2^{i}}}{d}$, and therefore $k_{i}d < q^{2^{i}}$, which satisfies the hypothesis of Theorem \ref{theofamily}.
		Finally, for fixed $d$ and assuming the use of all rows of the matrix, we easily obtain the compression ratio $\frac{n}{t} = \frac{(q^{2^i})^{k_i+1}}{(q^{2^i})^2} \leq \frac{(q^{2^i})^{\frac{q^{2^i}}{d}}}{(q^{2^i})^2} \leq \frac{n}{d \log n}$, since $d \log n = q^{2^i}\log q^{2^i} < (q^{2^i})^2$. Thus, $\frac{n}{t}$ is O($\frac{n}{\log n}$).
 \qed 
\end{proof}	

We show some examples in Table~\ref{table:d=2} and Table~\ref{table:d=3} for $q = 4, 16, 256, 65536$, fixed values of $d$, and increasing $k$. 
For each $q$ and $d$ we compute $k = \lceil \frac{q}{d}\rceil - 1$ and $n = q^{k+1}$. We note that the ratio grows very quickly as $k$ increases to its maximum.  

\begin{table}[h!]
	\caption{Example of prioritizing ratio increase with fixed $d = 2$. }
	\label{table:d=2}
	\centering
	\begin{tabular}{r|r|r|r|r|r|r}
		\hline
		i & q & k & d & n  & t & n/t\\ \hline
		0 & 4     & 1 & 2    & 16              & 12 &         1.33           \\
		1 & 16    & 7 & 2     & 4294967296            & 240  &       17895697.07           \\
		2 & 256   & 127 & 2   & $256^{128}$       & 65280   &   $2.75 \times 10^{303}$            \\
		3 & 65536 & 32767 & 2 & $65536^{32768}$ & 4294901760     &    $6.04 \times 10^{157816}$   
	\end{tabular}
\end{table}

\begin{table}[h!]
	\caption{Example of prioritizing ratio increase with fixed $d = 3$. }
	\label{table:d=3}
	\centering
	\begin{tabular}{r|r|r|r|r|r|r}
		\hline
		i & q     & k & d     & n           & t   &n/t       \\\hline
		0 & 4     & 1 & 3     & 16         & 16    & 1     \\
		1 & 16    & 5 & 3     & 16777216       & 256    & 65536    \\
		2 & 256   & 85 & 3    & $256^{86}$  & 65536  & $1.95 \times 10^{202}$    \\
		3 & 65536 & 21845 & 3 & $65536^{21846}$ & 4294967296 & $1.54 \times 10^{105211}$ 
	\end{tabular}
\end{table}

Monotone families are desirable for some applications due to their flexibility since the new tests involve only new items. By selecting specific blocks of rows for the embedding family, we are able to achieve monotone families with increasing compression ratio, which was not known in the literature \cite{hartung,nested}.

\begin{theorem}\label{monotoneinc}
	Let $d\geq 1$ and $k \geq 1$, $q$ a prime power such that $q \geq dk+1$. Let $C_{q,k,d}$ be a $d$-CFF obtained from Construction \ref{constpoly} and $\{\mathbf{C}_{q^{2^i}, k, d}\}_{i \geq 0}$ be an embedding family of $d$-CFFs for fixed $k$ and $d$, obtained from recursively applying Theorem \ref{theoinc}, where $\mathbf{C}_{q,k,d} = C_{q,k,d}$ and $\mathbf{C}_{q^{2^i}, k, d}$ be the reordered matrix as shown in Theorem \ref{theoinc}. Consider $M_{q^{2^i}, k, d}$ the submatrix of $\mathbf{C}_{q^{2^i}, k, d}$ corresponding to rows indexed by $(x_l,x_j)$ where $x_l \in \mathbb{F}_q, l = 1, \ldots, dk+1; x_j \in \mathbb{F}_{q^{2^i}}$, for all $i \geq 0$. Then $\{M_{q^{2^i}, k, d}\}_{i\geq 0}$ is a monotone family of $d$-CFF($t=(dk+1)q^{2^i}, n=q^{2^i(k+1)}$). Moreover, the compression ratio is $\rho(n) = n^{1 - \frac{1}{k+1}}$.
\end{theorem}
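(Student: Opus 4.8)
The plan is to recognize $\{M_{q^{2^i}, k, d}\}_{i \geq 0}$ as the embedding family $\{\mathbf{C}_{q^{2^i}, k, d}\}_{i \geq 0}$ produced by Theorem~\ref{theofamily} with the parameters held fixed ($k_i = k$, $d_i = d$ for all $i$), and then to verify the one extra requirement that upgrades an embedding family to a monotone one, namely that the lower-left block $Z$ in each embedding step is entirely zero. First I would check that $M_{q^{2^i}, k, d}$ in fact coincides with $\mathbf{C}_{q^{2^i}, k, d}$: with $d$ and $k$ fixed, every matrix in the sequence is restricted to exactly $dk+1$ blocks of rows, and since $dk+1 \leq q$ we may order $\mathbb{F}_{q^{2^i}}$ so that its first $dk+1$ first-coordinates already lie in $\mathbb{F}_q$. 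Hence every row of $\mathbf{C}_{q^{2^i}, k, d}$ is of the form $(x_l, x_j)$ with $x_l \in \mathbb{F}_q$, so the restriction defining $M$ discards nothing. This immediately yields the parameters: by Theorem~\ref{smallercff} applied over $\mathbb{F}_{q^{2^i}}$ (valid since $q^{2^i} \geq q \geq dk+1$), $M_{q^{2^i}, k, d}$ is a $d$-CFF with $t = (dk+1)q^{2^i}$ rows and $n = (q^{2^i})^{k+1} = q^{2^i(k+1)}$ columns.

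The key step is to show that $Z = 0$ in the decomposition
\[M_{q^{2^{i+1}}, k, d} = \begin{pmatrix} M_{q^{2^i}, k, d} & Y \\ Z & W \end{pmatrix}.\]
The rows appearing below $M_{q^{2^i}, k, d}$ are precisely those indexed by $(x_l, x_j)$ with $x_l$ one of the $dk+1$ chosen first-coordinates in $\mathbb{F}_q$ and $x_j \in \mathbb{F}_{q^{2^{i+1}}} \setminus \mathbb{F}_{q^{2^i}}$, while the columns spanning $Z$ are indexed by the polynomials $f \in \mathbb{F}_{q^{2^i}}[x]_{\leq k}$. The entry of column $B_f$ in row $(x_l, x_j)$ equals $1$ exactly when $f(x_l) = x_j$. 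Since $f$ has coefficients in $\mathbb{F}_{q^{2^i}}$ and $x_l \in \mathbb{F}_q \subseteq \mathbb{F}_{q^{2^i}}$, the value $f(x_l)$ lies in $\mathbb{F}_{q^{2^i}}$, whereas $x_j \in \mathbb{F}_{q^{2^{i+1}}} \setminus \mathbb{F}_{q^{2^i}}$; these can never be equal, so every entry of $Z$ is $0$. I expect this to be the crux of the argument: monotonicity is forced by the fact that an old polynomial evaluated at an old evaluation point stays inside the old field and therefore cannot reach any of the newly added second coordinates. Combined with the embedding structure inherited from Theorem~\ref{theoinc}, this shows that $\{M_{q^{2^i}, k, d}\}_{i \geq 0}$ is a monotone family of $d$-CFFs.

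Finally I would compute the compression ratio. Writing $Q = q^{2^i}$, we have $n = Q^{k+1}$, so $Q = n^{1/(k+1)}$ and $t = (dk+1)Q = (dk+1)\, n^{1/(k+1)}$. Hence $\frac{n}{t} = \frac{1}{dk+1}\, n^{1 - 1/(k+1)}$, and since $d$ and $k$ are constants this gives $\rho(n) = n^{1 - \frac{1}{k+1}}$, as claimed. The only genuinely delicate point in the whole argument is the zero-block verification above; the identification $M = \mathbf{C}$, the parameter bookkeeping, and the ratio computation are all routine once the field containments $\mathbb{F}_q \subseteq \mathbb{F}_{q^{2^i}} \subseteq \mathbb{F}_{q^{2^{i+1}}}$ are tracked carefully.
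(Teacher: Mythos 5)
Your proof is correct and takes essentially the same route as the paper's: it establishes the $d$-CFF property of each $M_{q^{2^i},k,d}$ via the argument of Theorem~\ref{smallercff}, proves monotonicity by the identical key observation that a polynomial $f \in \mathbb{F}_{q^{2^i}}[x]_{\leq k}$ evaluated at $x_l \in \mathbb{F}_q$ lies in $\mathbb{F}_{q^{2^i}}$ and hence can never equal an $x_j \in \mathbb{F}_{q^{2^{i+1}}} \setminus \mathbb{F}_{q^{2^i}}$, and computes the ratio the same way. The only (harmless) difference is that you spell out the ordering convention under which $M_{q^{2^i},k,d}$ coincides with $\mathbf{C}_{q^{2^i},k,d}$, a point the paper leaves implicit.
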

\begin{proof}
By fixing $x_l \in \mathbb{F}_{q},  l = 1, \ldots, dk+1$ we obtain a matrix with $dk+1$ blocks of rows and consequently $|B_f| = dk+1$, and by the same argument as in Theorem \ref{smallercff} we know that each matrix $M_{q^{2^i}, k, d}$ is a $d$-CFF with $d \leq \frac{q -1}{k}$. 
Moreover, if we look to the columns of $M_{q^{2^i}, k, d}$ that are represented by polynomials $f \in \mathbb{F}_{q^{2^{i-1}}}[x]_{\leq k}$, we know $f(x_l) = x_j \in \mathbb{F}_{q^{2^{i-1}}}$, and consequently $M_{(x_l, x_j), f} = 0$ for all the cases where $x_j \in \mathbb{F}_{q^{2^i}} \backslash \mathbb{F}_{q^{2^{i-1}}}, f \in \mathbb{F}_{q^{2^{i-1}}}[x]_{\leq k}$. It is easy to see that this matches the definition of monotone family of $d$-CFFs. Finally, if we use the maximum $d = \lfloor \frac{q-1}{k}\rfloor$ we obtain a sequence of $d$-CFF($t=q\times q^{2^i}, n=(q^{2^i})^{k+1}$), which has compression ratio $\frac{n}{t} = \frac{(q^{2^i})^{k+1}}{q\times q^{2^i}}  = \frac{n}{q n^{1/k+1}}$, which is $O(n^{1 - \frac{1}{k+1}})$.
\end{proof}

For an example of Theorem~\ref{monotoneinc} with $q=3, d=2, k=1$, we refer to Table~\ref{table:extensionField} to obtain the first two matrices in the sequence. Indeed, $M_{3, 1, 2}$ is the top left sub-matrix, and $M_{9, 1, 2}$ is the given matrix restricted to the first $dk+1 = 3$ blocks of rows (first two groups of rows in Table~\ref{table:extensionField}), and the columns corresponding to polynomials of degree up to $k=1$. The ratio of this monotone family is $\rho(n) = \sqrt{n}/3$.

\section{Using Embedding Sequences in Applications}\label{appsec}

The use of embedding families given in Theorem \ref{theofamily} requires some caution. While compression ratios are excellent when each full matrix is used, as seen in Corollary \ref{dinc}, Corollary \ref{kinc}, and Theorem \ref{monotoneinc}, bad ratios can be found when we need to add much less items than the maximum $n$ for a matrix. Note that when the number of columns of $\mathcal{M}^{(l)}$ is exceeded we need to use $\mathcal{M}^{(l+1)}$ and remove unused columns.  For example, with $q=4, d= 1, k=2$ we get a maximum $n = 64$, $t=(dk+1)q = 12$ and ratio $\rho(n) = 5.33$. If we decide to use the extension field to get larger values of $n$, the next value will be $q=16$ which gives $t=(dk+1)q = 48$. This new matrix can handle up to $n=4096$, but for the case were we only need $n = 65$ we will have a very small ratio of $\rho(n) = 1.35$. For this reason it would be desirable to develop techniques for ``smoothing out" the transition in compression ratio when we move from one matrix to the next in the embedding family.

One strategy to reduce these sharp transitions is to use values much smaller than the maximum allowed by the construction. In Figure \ref{graph} and Table \ref{graphtable} we show a choice of $q = 16, 256$,  $d = \log_4 n$, and necessary increases of $k=1, 2, 3$ to achieve the desired values of $n$. We note that as we change from one field ($q=16$) to the next one ($q=256$) we have a drop on the compression ratio, and this is due to the increase in the number of rows $(dk+1)q$ as we increase $q$. As $n$ grows, the increasing ratio is restored.

\begin{figure}[h!]
	\centering
	\includegraphics[width=1\textwidth]{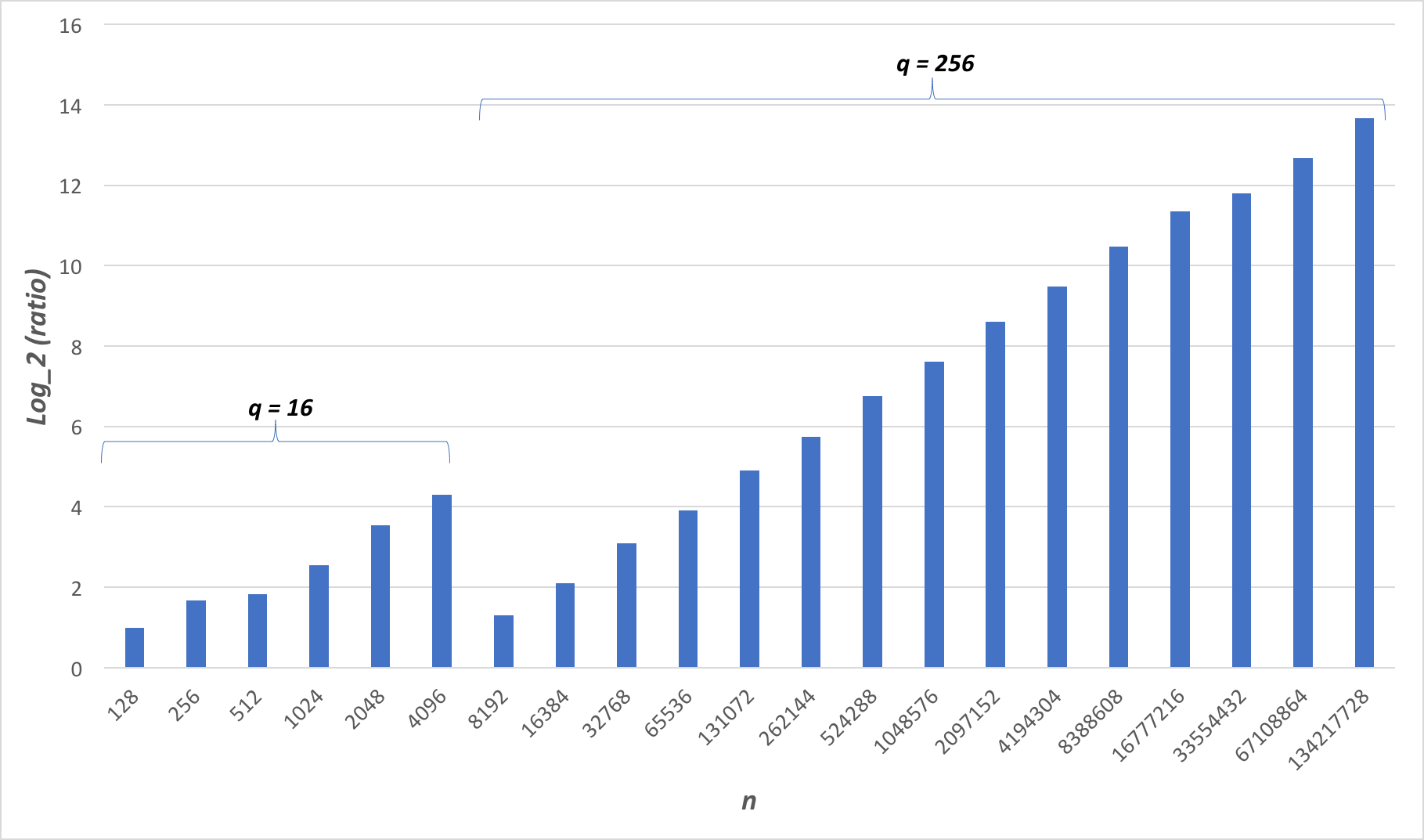}
		\caption{Compression ratio for $q=16,256; 1 \leq k \leq 3,; d=\log_4 n$.}
		\label{graph}
\end{figure}

\begin{table}[h!]
	\caption{Compression ratio for $q=16,256;1 \leq k \leq 3; d=\log_4 n$.}
	\label{graphtable}
		\centering
	\begin{tabular}{r|r|r|r|r|r}
		\hline
		q  & k & d & n   & t  & $\rho(n)=n/t$ \\ \hline
		16 & 1 & 3 & 128 & 64 & 2  \\
		16  &1	&4	&256&	80&	3.2\\
		16	&2	&4	&512&	144&	3.55\\
		16	&2&	5&	1024&	176&	5.81\\
		16	&2	&5&	2048&	176	&11.63\\
		16 &2&	6	&4096&	208	&19.69\\ \hline
		256	&2	&6	&8192	&3328	&2.46\\
		256	&2	&7	&16384	&3840	&4.26\\
		256	&2	&7	&32768	&3840	&8.53\\
		256	&2	&8	&65536	&4352	&15.05\\
		256	&2	&8	&131072	&4352	&30.11\\
		256	&2	&9	&262144	&4864	&53.89\\
		256	&2	&9	&524288	&4864	&107.78\\
		256	&2	&10	&1048576	&5376	&195.04\\
		256	&2	&10	&2097152	&5376	&390.09\\
		256	&2	&11	&4194304	&5888	&712.34\\
		256	&2	&11	&8388608	&5888	&1424.69\\
		256	&2	&12	&16777216	&6400	&2621.44\\
		256	&3	&12	&33554432&	9472	&3542.48\\
		256	&3	&13	&67108864	&10240	&6553.6\\
		256	&3	&13	&134217728	&10240	&13107.2\\
	\end{tabular}
\end{table}

\section{Generalized Construction of Embedding Families}\label{section:generalized}

 In this section, we present a generalization of the results presented in Section \ref{embeddingsec}. 
We start by defining a few objects that will be used, such as Orthogonal Arrays (OAs), Packing Arrays (PAs), and their relationships with separating hash families (SHF). Then we discuss how they can be used to construct embedding families.

\begin{definition}
	An \emph{orthogonal array} OA($v^t;t,k,v$) is an $v^t \times k$ array with elements from an alphabet of $v$ symbols, such that in any $t$ columns, every $t$-tuple of points is contained in exactly one row.
\end{definition}
\begin{definition} [Stevens and Mendelsohn\cite{PA}]
	A \emph{packing array}, PA($n;t,k,v$), is an $n\times k$ array with values from  an alphabet with $v$ symbols, such that in any $t$ columns, every $t$-tuple of points is contained in at most one row. 
\end{definition}

We note that for a PA($n;t,k,v$) we must have $n \leq v^t$, and that an OA($v^t;t,k,v$) is a packing array with maximum number of rows.

\begin{definition}[Stinson et al. \cite{Stinson2008}]
	An $(n, m, \{w_1, \ldots, w_t\})$-separating hash family is a set of functions $\mathcal{F}$, such that $|Y|=n, |X|=m,$ $f:Y \rightarrow X$ for each $f \in \mathcal{F}$, and for any sets $C_1, \ldots, C_t \subseteq \{1, 2,\ldots,n\}$ such that $|C_i|=w_i, 1 \leq i \leq t$ and $C_i \cap C_j = \emptyset$ for $i \neq j$, there exist at least one function $f \in \mathcal{F}$
	such that 
	
	\[\{f(y):y \in C_i\}\cap \{f(y):y \in C_j\}=\emptyset.\]
	
	For $|\mathcal{F}| = N$, the $(n, m, \{w_1, \ldots, w_t\})$-separating hash family is denoted \\SHF$(N; n, m, \{w_1, \ldots, w_t\})$.
\end{definition}

\begin{remark}
	A SHF$(N; n, m, \{w_1, \ldots, w_t\})$ can be depicted as an $N \times n$ matrix $A$ with entries from $\{1, 2,\ldots,m\}$, where the rows represent the functions $f$, the columns represent the elements in $Y$, and the entry in $A(f,y) = f(y)$.
	Given disjoint sets of columns $C_1, \ldots, C_t$, there exists at least one row $r$ in $A$ with the following property:
	\[\{A(r,y):y \in C_i\}\cap \{A(r,y):y \in C_j\}=\emptyset.\]  for all $i \neq j$. 
\end{remark}

The next figure shows an example of an SHF($2;6,4,\{1,2\}$), based on a construction given by Li, Van Rees and Wei \cite{monotone}.
	\begin{figure}[h]
		\centering
		\caption{An SHF($2;6,4,\{1,2\}$).}
		\label{SHF2}
		\begin{tabular}{|l|l|l|l|l|l|}
			\hline
			1&2&3&4&4&4 \\ \hline
			4&4&4&1&2&3 \\ \hline
		\end{tabular}
	\end{figure}

Now we present some relationships between packing arrays and separating hash families, and how we can use them to construct CFFs and embedding families.
In the following propositions, we consider a PA($n; t, k, v$) and the fact that any two rows have at most $t-1$ positions in common. A similar result is presented by Stinson et al. \cite{stinson2000}, where they propose the use of orthogonal arrays to construct perfect hash families, and mention that similar results can be achieved for SHFs. 


\begin{proposition}\label{shfprop}
	If $A$ is a PA($n; t, k, v$), and $w \leq \frac{k-1}{t-1}$, then $A^T$ is a SHF($N=k; n, m = v, \{1,w\}$).
\end{proposition}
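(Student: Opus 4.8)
The plan is to reduce the separating property of $A^T$ to the elementary fact, noted just before the proposition, that any two distinct rows of a PA($n;t,k,v$) agree in at most $t-1$ coordinates. First I would justify this fact: if two distinct rows of $A$ agreed in $t$ columns, then in those $t$ columns the same $t$-tuple of points would occur in two rows, contradicting the defining property of a packing array. Hence any pair of distinct rows of $A$ coincides in at most $t-1$ positions.

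Next I would unwind the definitions so that the SHF condition becomes a statement purely about rows of $A$. The matrix $A^T$ is $k \times n$ with entries in $\{1, \ldots, v\}$, so it has the correct shape to be an SHF($N=k; n, m=v, \{1,w\}$). Its columns are indexed by the rows $R_1, \ldots, R_n$ of $A$, and its rows are indexed by the $k$ coordinate positions of $A$; the entry of $A^T$ in row $p$ and column $y$ is $R_y[p]$, the $p$-th symbol of $R_y$. Given disjoint column sets $C_1 = \{a\}$ and $C_2 = \{b_1, \ldots, b_w\}$, the separating condition to verify is the existence of a position $p \in \{1, \ldots, k\}$ with $R_a[p] \neq R_{b_i}[p]$ for every $i$.

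Then I would finish by a union-bound count of the \emph{bad} positions. For each fixed $i$, the set of positions $p$ with $R_a[p] = R_{b_i}[p]$ is exactly the agreement set of the distinct rows $a$ and $b_i$, which by the first step has size at most $t-1$. Taking the union over $i = 1, \ldots, w$, at most $w(t-1)$ positions can be bad. The hypothesis $w \leq \frac{k-1}{t-1}$ gives $w(t-1) \leq k-1 < k$, so at least one of the $k$ positions is good; the corresponding row of $A^T$ separates $C_1$ from $C_2$, which establishes the claim.

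The computations here are routine; the only point needing care is the bookkeeping under transposition — keeping straight that the columns of $A^T$ are the rows of $A$ and that a ``row'' of the SHF is a coordinate position of the PA — together with observing that the hypothesis is precisely the threshold $w(t-1) < k$ that makes the union bound strict.
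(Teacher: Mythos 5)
Your proof is correct and follows essentially the same route as the paper's: both reduce the $\{1,w\}$-separating property to the fact that two distinct rows of a PA of strength $t$ agree in at most $t-1$ positions, and then apply the union bound $w(t-1) \leq k-1 < k$ to exhibit a good coordinate. The only difference is cosmetic --- you explicitly justify the agreement bound and spell out the transposition bookkeeping, both of which the paper takes for granted.
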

\begin{proof}
	Take $w+1$ rows $r, r_1, r_2, \ldots, r_w$ of $A$. We need to find a column $j$ such that
	\begin{equation}\label{property:PA}
	A[r_l,j] \neq A[r,j] \text{ for all } l \in \{1, \ldots, w\}.
	\end{equation}
	Let $B_i = \{A[r,l] = A[r_i,l]: l \in \{1, \ldots, k\} \}$, $i \in \{1, \ldots, w\}$, we know $|B_i| \leq t-1$ since $A$ is a PA. We claim there exists a column $j \in T = \{1, \ldots, k\} \backslash \bigcup_{i=1}^{w}B_i$, which is the desired column in (\ref{property:PA}). In fact, 
	
	\[|T| = |\{1, \ldots, k\}\backslash \bigcup_{i=1}^{w}B_i| \geq k - \sum_{i=1}^{w}|B_i| \geq  k - w(t-1) \geq k-\frac{(k-1)}{(t-1)}(t-1) = 1\]
	

\noindent	
	so column $j$ exists, which becomes row $j$ when using $A^T$, and therefore guarantees the necessary and sufficient property for a SHF of type $\{1,w\}$.  
\end{proof}

Now we show we can construct $d$-CFF incidence matrices from SHFs. Similar results can be found in \cite{mastersKim} from perfect hash families, and a more general result can be found in \cite{StinsonWei2004} for the construction of $(w,d)$-CFFs from SHFs of type $\{w,d\}$. 


\begin{construction}\label{constSHFCFF}
	Let $A$ be a $SHF(N; n, m,\{1,w\})$, we build a $mN \times n$ matrix $\mathcal{M}$ as follows. Index each row of $\mathcal{M}$ with tuples $(i,x)$, for $x = 1, \ldots, m$, $i=1, \ldots, N$, and each column $j = 1, \ldots, n$, then
	
	\[\mathcal{M}_{(i,x),j} = \begin{cases}
	1 & \text{ if } A_{i,j} = x, \\
	0 & \text{ otherwise.}
	\end{cases}\]
	
\end{construction}

\begin{proposition}\label{SHFCFF}
	Let A be a SHF($N; n, m,\{1,w\}$), then $\mathcal{M}$ built via Construction \ref{constSHFCFF} gives a $d$-CFF($mN, n$), with $d \leq w$. 
\end{proposition}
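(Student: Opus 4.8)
The plan is to show that the matrix $\mathcal{M}$ from Construction~\ref{constSHFCFF} satisfies the $d$-cover-free property~(\ref{property:cff}) for $d \leq w$. Each column $j$ of $\mathcal{M}$ corresponds to column $j$ of the SHF $A$, and the set $B_j \subseteq X$ it represents consists of the row-indices $(i,x)$ with $\mathcal{M}_{(i,x),j} = 1$, i.e. $B_j = \{(i, A_{i,j}) : i = 1, \ldots, N\}$. The key observation is that membership in $B_j$ encodes exactly the values of the function $f_i$ (the $i$-th row of the SHF) evaluated at the element $j \in Y$. So I would first make this dictionary between columns of $\mathcal{M}$ and columns of $A$ explicit, noting $|B_j| = N$ since each row $i$ contributes exactly one entry $(i, A_{i,j})$.

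Next I would translate the cover-free condition into the separating condition of the SHF. Take any column $j_0$ together with $d$ other columns $j_1, \ldots, j_d$ with $d \leq w$. I need to produce a row $(i,x) \in B_{j_0}$ that lies in none of $B_{j_1}, \ldots, B_{j_d}$, which by the dictionary means finding an index $i$ such that $A_{i,j_0} \neq A_{i,j_\ell}$ for all $\ell = 1, \ldots, d$. To invoke the SHF property, set $C_1 = \{j_0\}$ (a singleton, so $w_1 = 1$) and $C_2 = \{j_1, \ldots, j_d\}$; since $d \leq w$ these sets have sizes at most $1$ and $w$ respectively and are disjoint (padding $C_2$ with extra columns if $d < w$, or just using the weaker requirement directly). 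The defining property of a $\{1,w\}$-SHF then guarantees a row $i$ with $\{A_{i,j_0}\} \cap \{A_{i,j_1}, \ldots, A_{i,j_d}\} = \emptyset$, which is precisely $A_{i,j_0} \neq A_{i,j_\ell}$ for every $\ell$.

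Finally I would read this back through the dictionary: the element $(i, A_{i,j_0})$ belongs to $B_{j_0}$ but belongs to no $B_{j_\ell}$ because $A_{i,j_\ell} \neq A_{i,j_0}$, so $B_{j_0} \nsubseteq \bigcup_{\ell=1}^{d} B_{j_\ell}$, establishing~(\ref{property:cff}). The dimension count is immediate: $\mathcal{M}$ has $mN$ rows by construction and $n$ columns, giving a $d$-CFF($mN, n$). The one point needing a little care — and the only place the argument could go wrong — is the disjointness and cardinality bookkeeping when passing to the SHF definition: I must ensure $C_1$ and $C_2$ are genuinely disjoint (automatic since $j_0 \notin \{j_1, \ldots, j_d\}$ as the $d+1$ columns are distinct) and handle the case $d < w$ so that the sizes match the SHF type $\{1,w\}$, which is harmless since a separating family of type $\{1,w\}$ also separates a singleton from any set of size at most $w$.
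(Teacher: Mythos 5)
Your proof is correct and follows essentially the same route as the paper's: translate each column of $\mathcal{M}$ into the set $B_j = \{(i,A_{i,j}) : i=1,\ldots,N\}$, invoke the $\{1,w\}$-separating property on the singleton $\{j_0\}$ versus the other columns to find a row $i$ with $A_{i,j_0}\neq A_{i,j_\ell}$ for all $\ell$, and read off that $(i,A_{i,j_0})$ witnesses non-coverage. The only cosmetic difference is that you argue directly for $d\leq w$ columns (with the padding remark), while the paper takes exactly $w+1$ columns and concludes the $w$-CFF property, which subsumes all $d\leq w$.
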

\begin{proof}
If we take $w+1$ columns $c_0, c_1, \ldots, c_w$ of a SHF $A$ of type $\{1,w\}$, we know there will be a row $i$ such that $A[i,c_0] \neq A[i,c_j]$ for $j \in \{1, \ldots, w\}$. In Construction \ref{constSHFCFF} we convert each element $x$ in $A$ into a vector of size $m$ with one ``1" in position $x$ and ``0" in the remaining positions. Due to the SHF property, we note that there is at least one row $(i,x)$ such that $\mathcal{M}_{(i,x),c_0}= 1$ while $\mathcal{M}_{(i,x),c_j} = 0,$  $(1 \leq j \leq w)$, for any columns $c_0, c_1, \ldots, c_w$ in $\mathcal{M}$, which matches the requirements for a $w$-CFF. 
Moreover, since we expand each row of $A$ into an array of size $m$, $\mathcal{M}$ is $w$-CFF($mN, n$).


\end{proof}	


\begin{remark}
	If we use a PA($n; t, k, v$) to build a SHF($N=k; n, m = v, \{1,w\}$) with $w \leq \frac{k-1}{t-1}$ as in Proposition \ref{shfprop} and then apply Construction \ref{constSHFCFF}, we obtain a $d$-CFF($mN, n$) with $d \leq \frac{k-1}{t-1}$.
\end{remark} 

\begin{remark}
	For the special case where we use an OA($q^t;t,q,q$) constructed using polynomials over finite fields using Bush's construction~\cite{bush1952}, $q$ a prime power, to build a SHF($q; q^t, q,\{1,w\}$) and then apply Construction \ref{constSHFCFF}, we have a $d$-CFF($q^2, q^t$) for $ d \leq \frac{q-1}{t-1}$, which is equivalent to Construction \ref{constpoly} using polynomials. 
\end{remark} 

When we construct a $d$-CFF from a SHF that came from a PA, we observe a similar property of blocks of rows giving increasing values of $d$ as in Theorem \ref{smallercff}.
\begin{proposition}
	Let $P$ be a PA($n;t,k,v$), $A = P^T$ be a SHF($N=k; n, m = v, \{1,w\}$) with $w \leq \frac{k-1}{t-1}$, and $k_i =  i(t-1) + 1$. Consider $\mathcal{M}$ the $w$-CFF($k \times v, n$) obtained from Construction \ref{constSHFCFF} using $A$ and let a ``block" of rows be any consecutive $m$ rows indexed by $(i,1), \ldots, (i,m)$. When we restrict $\mathcal{M}$ to any $k_i$ blocks of rows we obtain a $i$-CFF($k_i \times v, n$).
\end{proposition}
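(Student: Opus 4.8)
The plan is to avoid rerunning the cover-free counting argument and instead reduce the statement to the two results already in hand, Proposition~\ref{shfprop} and Proposition~\ref{SHFCFF}. The crucial observation is that restricting $\mathcal{M}$ to $k_i$ blocks of rows produces exactly the same matrix one obtains by feeding the corresponding column-subarray of $P$ through the whole pipeline PA $\to$ SHF $\to$ CFF. So the entire proof amounts to identifying the restricted matrix with a freshly-constructed CFF and then quoting the propositions.

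First I would fix $k_i = i(t-1)+1$ blocks, say those indexed by rows $r_1,\ldots,r_{k_i}$ of $A$, and note that in Construction~\ref{constSHFCFF} each block $(r,1),\ldots,(r,m)$ of $\mathcal{M}$ is produced solely from row $r$ of $A$, with no interaction between distinct rows. Hence the restriction of $\mathcal{M}$ to these blocks coincides with the matrix obtained by applying Construction~\ref{constSHFCFF} to the $k_i \times n$ submatrix $A'$ of $A$ consisting of rows $r_1,\ldots,r_{k_i}$. Since $A = P^T$, this $A'$ is the transpose of the $n \times k_i$ subarray $P'$ of $P$ obtained by keeping only columns $r_1,\ldots,r_{k_i}$.

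Next I would check that $P'$ is itself a PA($n;t,k_i,v$): deleting columns from a packing array cannot create a repeated $t$-tuple among $t$ of the surviving columns, so the strength-$t$ packing property is inherited. Applying Proposition~\ref{shfprop} to $P'$ with $w = i$, the hypothesis $w \leq \frac{k_i - 1}{t-1}$ holds with equality, since $\frac{k_i-1}{t-1} = \frac{i(t-1)}{t-1} = i$; therefore $A' = (P')^T$ is a SHF($k_i;n,v,\{1,i\}$). Then Proposition~\ref{SHFCFF} applied to $A'$ shows that Construction~\ref{constSHFCFF} produces an $i$-CFF($k_i \times v, n$). Because the restricted $\mathcal{M}$ is precisely that matrix, it is an $i$-CFF($k_i \times v, n$), as claimed.

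The only delicate point is the first bookkeeping step: confirming that the block structure of $\mathcal{M}$ lines up so that discarding blocks corresponds exactly to discarding the matching columns of $P$, with no cross-block contribution. Once that identification is pinned down, the rest is an immediate invocation of the two propositions, and the phenomenon mirrors the early-abort behaviour already established for the polynomial construction in Theorem~\ref{smallercff}. Should a self-contained count be preferred instead, I would take any $i+1$ columns $c_0,\ldots,c_i$ of the restricted $\mathcal{M}$, let $B_l$ be the set of kept columns of $P$ on which rows $c_0$ and $c_l$ of $P$ agree (so $|B_l| \leq t-1$ by the packing property), and observe that $k_i - \sum_{l=1}^{i}|B_l| \geq i(t-1)+1 - i(t-1) = 1$, which exhibits a column where $c_0$ differs from every $c_l$ and hence the required cover-free witness row.
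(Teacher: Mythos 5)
Your proof is correct and follows essentially the same route as the paper's: restrict $P$ to the $k_i$ chosen columns (a PA of strength $t$ on fewer columns), invoke Proposition~\ref{shfprop} with $w=i$ (which holds with equality since $\frac{k_i-1}{t-1}=i$), and then apply Construction~\ref{constSHFCFF} and Proposition~\ref{SHFCFF}. You are merely more explicit than the paper about the bookkeeping step identifying the block-restricted $\mathcal{M}$ with the CFF built from the restricted array, which the paper treats as immediate.
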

\begin{proof}
	From Proposition \ref{shfprop} we know that a SHF of type $\{1,w\}$ can be created from a PA $P$ of strength $t$ as long as the number of columns of the PA is at least $k \geq w(t-1) + 1$. We can restrict the packing array $P$ to $k_i=i(t-1) + 1$ columns, $1 \leq i \leq w$, without compromising its properties and therefore obtain a SHF $A_i$ of type $\{1,i\}$. By applying Construction \ref{constSHFCFF} with $A_i$ we obtain a $i$-CFF($k_i \times v, n$). 
\end{proof}

The next proposition shows that a special sequence of PAs generalizes the polynomial construction of embedding family given in Theorem \ref{theofamily}.

\begin{proposition}\label{embeddingPA}
	Let $(\mathcal{P}^{(l)})_l$ be a sequence of PAs, where $\mathcal{P}^{(l)}$ is a PA($n_l; t_l, k_l, v_l$), $n_l \leq n_{l+1}, t_l \leq t_{l+1}, k_l \leq k_{l+1}, v_l \leq v_{l+1}$, and
	\[\mathcal{P}^{(l+1)}= \begin{pmatrix} \mathcal{P}^{(l)} & Y\\ Z & W \end{pmatrix},\]
	
	\noindent
and in addition $d_l \leq \frac{k_l-1}{t_l-1}$, $d_l \leq d_{l+1}$, for all $l \geq 1$. Then there exists an embedding family of $d_l$-CFF($k_l \times v_l, n_l$).
	
\end{proposition}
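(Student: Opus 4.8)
The plan is to feed each packing array through the pipeline PA $\to$ SHF $\to$ CFF established in Propositions~\ref{shfprop} and~\ref{SHFCFF}, and then to verify that the block-triangular embedding of the $\mathcal{P}^{(l)}$ is inherited, in order, by the resulting incidence matrices. First, for each $l$ I would set $A_l = (\mathcal{P}^{(l)})^T$. Since $d_l \leq \frac{k_l-1}{t_l-1}$, Proposition~\ref{shfprop} shows that $A_l$ is a SHF($k_l; n_l, v_l, \{1, d_l\}$). Applying Construction~\ref{constSHFCFF} to $A_l$ and invoking Proposition~\ref{SHFCFF} then yields a matrix $\mathcal{M}^{(l)}$ that is a $d_l$-CFF($k_l \times v_l, n_l$), whose rows are indexed by pairs $(i,x)$ with $1 \leq i \leq k_l$, $1 \leq x \leq v_l$, and whose columns are indexed by $1 \leq j \leq n_l$. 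This already gives the correct dimensions and the cover-free property for every member of the sequence.

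Second, I would check the scalar monotonicity conditions of Definition~\ref{embeddingfamily}. From $k_l \leq k_{l+1}$ and $v_l \leq v_{l+1}$ we get rows($l$) $= k_l v_l \leq k_{l+1} v_{l+1} =$ rows($l+1$); the inequality $n_l \leq n_{l+1}$ gives cols($l$) $\leq$ cols($l+1$); and $d_l \leq d_{l+1}$ is assumed. The ground set of $\mathcal{M}^{(l)}$ is the index set $\{(i,x) : 1 \leq i \leq k_l,\ 1 \leq x \leq v_l\}$, which is contained in the analogous index set for $\mathcal{M}^{(l+1)}$, so $X_l \subseteq X_{l+1}$.

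Third, and this is the crux, I would establish the block form. Transposing the embedding of the packing arrays gives
\[A_{l+1} = (\mathcal{P}^{(l+1)})^T = \begin{pmatrix} A_l & Z^T \\ Y^T & W^T \end{pmatrix},\]
so $A_l$ occupies the top-left $k_l \times n_l$ corner of $A_{l+1}$; in particular $A_{l+1}[i,j] = A_l[i,j]$ whenever $i \leq k_l$ and $j \leq n_l$. Now restrict $\mathcal{M}^{(l+1)}$ to the rows $(i,x)$ with $i \leq k_l$, $x \leq v_l$ and to the columns $j \leq n_l$. For such indices $A_{l+1}[i,j] = A_l[i,j]$, and this value lies in $\{1, \ldots, v_l\}$ because $\mathcal{P}^{(l)}$ uses only the symbols $\{1,\ldots,v_l\} \subseteq \{1, \ldots, v_{l+1}\}$; hence $\mathcal{M}^{(l+1)}_{(i,x),j} = 1$ if and only if $A_l[i,j] = x$, i.e. if and only if $\mathcal{M}^{(l)}_{(i,x),j} = 1$. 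Thus the chosen submatrix equals $\mathcal{M}^{(l)}$ exactly. Reordering the rows of $\mathcal{M}^{(l+1)}$ so that the ``old'' rows $(i,x)$ with $i \leq k_l$, $x \leq v_l$ are listed first (the columns already being in the order $1, \ldots, n_{l+1}$), I obtain
\[\mathcal{M}^{(l+1)} = \begin{pmatrix} \mathcal{M}^{(l)} & Y' \\ Z' & W' \end{pmatrix},\]
which is precisely the embedding form required by Definition~\ref{embeddingfamily}.

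The main obstacle is the bookkeeping in this third step: I must ensure that the row relabelling $(i,x) \mapsto$ position is handled so that no old column receives a spurious $1$ in a new row. This is exactly where it matters that $\mathcal{P}^{(l)}$ sits inside $\mathcal{P}^{(l+1)}$ as a literal, non-recoded submatrix over the restricted alphabet $\{1, \ldots, v_l\}$; if the embedding of the packing arrays silently relabelled symbols, the unit-vector expansion of Construction~\ref{constSHFCFF} would scatter the old ones across rows with $x > v_l$ and the top-left block would no longer coincide with $\mathcal{M}^{(l)}$. Everything else reduces to the routine monotonicity inequalities already recorded above, so the sequence $(\mathcal{M}^{(l)})_l$ is an embedding family of $d_l$-CFF($k_l \times v_l, n_l$), generalizing Theorem~\ref{theofamily}.
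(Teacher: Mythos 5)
Your proposal is correct and follows essentially the same route as the paper's proof: transpose each PA to get a SHF of type $\{1,d_l\}$ via Proposition~\ref{shfprop}, note that transposition carries the block embedding of the $\mathcal{P}^{(l)}$ over to the $\mathcal{A}^{(l)}$, and apply Construction~\ref{constSHFCFF} with Proposition~\ref{SHFCFF} to obtain the CFFs. The only difference is that you carefully spell out the row-indexing bookkeeping (in particular, that old entries lie in $\{1,\ldots,v_l\}$, so no spurious ones land in rows with $x > v_l$), which the paper compresses into ``it is easy to see that the smaller CFF $\mathcal{M}^{(l)}$ is in the top corner of $\mathcal{M}^{(l+1)}$.''
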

\begin{proof}
Let $\mathcal{A}^{(l)}$ be obtained by transposing $\mathcal{P}^{(l)}$. By Proposition \ref{shfprop}, since $d_l \leq \frac{k_l-1}{t_l-1}$, $\mathcal{A}^{(l)}$ is a SHF($N_l = k_l; n_l, m_l = v_l, \{1, d_l\}$), and consequently $(\mathcal{A}^{(l)})_l$ is a sequence of SHFs with the following form
	
	
	\[\mathcal{A}^{(l+1)}= \begin{pmatrix} \mathcal{A}^{(l)} & Z^T\\ Y^T & W^T \end{pmatrix}.\]
	Then, for each $\mathcal{A}^{(l)}$ we apply Construction \ref{constSHFCFF} and we get $\mathcal{M}^{(l)}$, a $d_l$-CFF($k_l \times v_l, n_l$) by Proposition~\ref{SHFCFF}. It is easy to see that the smaller CFF $\mathcal{M}^{(l)}$ is in the top corner of $\mathcal{M}^{(l+1)}$, and all the requirements for being an embedding family are satisfied. 
	
\end{proof}

\section{Conclusion}\label{sec:conclusion}
This paper introduces the idea of embedding families of CFFs as a general framework to look at how CFF constructions can be leveraged to optimize parameters of interest to applications. The infinite families obtained in Section \ref{embeddingsec} have excellent asymptotic compression ratios, some matching the information theoretical upper bound,
and permit increase of $d$ and $n$.
However, these constructions  present abrupt increases of $t$ and $n$ (when moving to the next $q$) that need to be ``smoothed out" for  improved use in applications, as discussed in Section \ref{appsec}. An important direction for future work is the study of  adequate growth for $d$ and $k$ to yield smoother instances of these families.\\
%
%
%
%

\bibliographystyle{plain}
\bibliography{bibliografia}

%
%





\end{document}